\theoremstyle{plain}
\newtheorem{lemma}{Lemma}[section] 
\newtheorem{theorem}[lemma]{Theorem}
\newtheorem{corollary}[lemma]{Corollary}
\newtheorem{proposition}[lemma]{Proposition}
\theoremstyle{definition}
\newtheorem{example}[lemma]{Example}
\newcommand{\M}{\operatorname{\mathbb M}}
\newcommand{\gr}{\operatorname{gr}}
\newcommand{\V}{\mathcal V}
\newcommand{\Stab}{\operatorname{Stab}}
\newcommand{\Zset}{\mathbb Z}
\newcommand{\so}{\mathbf{s}}
\newcommand{\ra}{\mathbf{r}}
\newcommand{\Aut}{\operatorname{Aut}}
\newcommand{\AUT}{\operatorname{AUT}}
\newcommand{\End}{\operatorname{End}}
\newcommand{\END}{\operatorname{END}}
\newcommand{\Hom}{\operatorname{Hom}}
\newcommand{\HOM}{\operatorname{HOM}}
\title[Crossed product Leavitt path algebras]{Crossed product Leavitt path algebras}
\author{Roozbeh Hazrat}
\address{Centre for Research in Mathematics and Data Science, Western Sydney University, Australia} 
\email{r.hazrat@westernsydney.edu.au}
\author{Lia Va\v s}
\address{Department of Mathematics, Physics and Statistics, University of the Sciences, Philadelphia, PA 19104, USA}
\email{l.vas@usciences.edu}
\subjclass[2010]{ 
16S88 
16W50 
16S35 
16S50 
19A49
}
\keywords{graded ring, crossed product, group ring, Leavitt path algebra, Grothendieck group, graph monoid}
\thanks{The first author would like to acknowledge Australian Research Council grant DP160101481. }
\begin{document}
 
\begin{abstract} 
If $E$ is a directed graph and $K$ is a field, the Leavitt path algebra $L_K(E)$ of $E$ over $K$ is naturally graded by the group of integers $\mathbb Z.$ We formulate properties of the graph $E$ which are equivalent with $L_K(E)$ being a crossed product, a skew group ring, or a group ring with respect to this natural grading. We state this main result so that the algebra properties of $L_K(E)$ are also characterized in terms of the pre-ordered group properties of the Grothendieck $\mathbb Z$-group of $L_K(E)$. If $E$ has finitely many vertices, we characterize when $L_K(E)$ is strongly graded in terms of the properties of $K_0^\Gamma(L_K(E)).$ Our proof also provides an alternative to the known proof of the equivalence $L_K(E)$ is strongly graded if and only if $E$ has no sinks for a finite graph $E.$ We also show that, if unital, the algebra $L_K(E)$ is strongly graded and graded unit-regular if and only if $L_K(E)$ is a crossed product.

In the process of showing the main result, we obtain conditions on a group $\Gamma$ and a $\Gamma$-graded division ring $K$ equivalent with the requirements that a $\Gamma$-graded matrix ring $R$ over $K$ is strongly graded, a crossed product, a skew group ring, or a group ring. We characterize these properties also in terms of the action of the group $\Gamma$ on the Grothendieck $\Gamma$-group $K_0^\Gamma(R).$  
\end{abstract}
 
\maketitle

\section{Introduction}

After emerging about fifteen years ago, Leavitt path algebras became a focus of intense research which inspired various generalizations, surprising connections with other areas of mathematics, and a variety of interesting conjectures. If $K$ is a field considered trivially graded by $\Zset,$ the Leavitt path algebra $L_K(E)$ of a directed graph $E$ is naturally graded by the ring of integers $\Zset.$ One trend in Leavitt path algebra research is to characterize (graded) algebra properties of $L_K(E)$ in terms of the graph properties of $E.$ Thus, if (graded) algebra properties (P1) and (P2) of $L_K(E)$ can be characterized in terms of the properties (G1) and (G2) of $E$ respectively, one can easily create examples of (graded) algebras having (P1) and not (P2) by considering graphs having (G1) and not (G2). 

Strongly graded rings were introduced as generalized crossed product rings by Kanzaki in \cite{Kanzaki}. These rings were later renamed strongly graded rings by Dade who systematically studied them in \cite{Dade}. In the chain of implications below, the relation $X$ $\Rightarrow$ $Y$ denotes that the class of graded rings $X$ is contained in the class of graded rings $Y.$ In addition, in the chain below, whenever we have $X$ $\Rightarrow$ $Y,$ the elements of $X$ are considered to be especially well-behaved in the class $Y.$
\begin{center}
Group rings $\Rightarrow$ skew group rings $\Rightarrow$ crossed products $\Rightarrow$ strongly graded rings $\Rightarrow$ graded rings.
\end{center} 
We review the definitions of all five classes in Section \ref{section_prerequisites}. 

The property of $E$ equivalent with the condition that $L_K(E)$ is strongly graded was produced in \cite{Roozbeh_Israeli} for finite graphs and in \cite{Lisa_et_al_strongly_graded} for general graphs. In the main result of this paper, Theorem \ref{main}, we formulate properties of $E$ equivalent with conditions that $L_K(E)$ is a crossed product, a skew group ring, or a group ring. Hence, Leavitt path algebras in any of the five classes listed above can be characterized by the properties of the underlying graph. We formulate Theorem \ref{main} so that the algebra properties are also related to the appropriate pre-ordered group properties of the Grothendieck $\Zset$-group of $L_K(E)$.

In the process of proving Theorem \ref{main}, we prove Theorem \ref{three_characterizations} listing conditions equivalent with the requirements that a graded matrix ring $R=\M_n(K)(\gamma_1,\ldots, \gamma_n)$ over a $\Gamma$-graded division ring $K$ is strongly graded, a crossed product, a skew group ring or a group ring for any group $\Gamma.$ If $\Gamma_K$ is the set of $\gamma\in\Gamma$ such that the $\gamma$-component of $K$ is nonzero, then $\Gamma_K$ is a subgroup of $\Gamma$ and the permutation module $\Zset[\Gamma/\Gamma_K]$ is naturally isomorphic to the Grothendieck $\Gamma$-group $K_0^\Gamma(R).$ We formulate the conditions of Theorem \ref{three_characterizations} in terms of the properties of the cosets $\gamma_1\Gamma_K,\ldots, \gamma_n\Gamma_K$ as well as in terms of the type of action of $\Gamma$ on the ordered group $\Zset[\Gamma/\Gamma_K].$ 

If the set of vertices $E^0$ is finite, Theorem \ref{strongly_graded} characterizes when $L_K(E)$ is strongly graded in terms of the properties of $K_0^\Gamma(L_K(E)).$ In addition, our proof provides an alternative to the known proof of the equivalence $L_K(E)$ is strongly graded if and only if $E$ has no sinks for a finite graph $E$ (\cite[Theorem 3.15]{Roozbeh_Israeli} and \cite[Proposition 45]{Nystedt_Oinert}). In Corollary \ref{unit_reg_corollary}, we show that, if unital, $L_K(E)$ is strongly graded and graded unit-regular if and only if $L_K(E)$ is a crossed product.

\section{Prerequisites and preliminaries}\label{section_prerequisites}

We use $\Gamma$ to denote an arbitrary group unless otherwise stated. We use multiplicative notation for the operation of $\Gamma$ and $\varepsilon$ for the identity element.

\subsection{Graded rings}\label{subsection_graded_rings}
A ring $R$ is a \emph{$\Gamma$-graded ring} if $R=\bigoplus_{ \gamma \in \Gamma} R_{\gamma}$ for additive subgroups $R_{\gamma}$ of $R$ such that $R_{\gamma}  R_{\delta} \subseteq R_{\gamma\delta}$ for all $\gamma, \delta \in \Gamma$. If it is clear from the context that $R$ is graded by $\Gamma$, $R$ is said to be a graded ring. The elements of $\bigcup_{\gamma \in \Gamma} R_{\gamma}$ are the \emph{homogeneous elements} of $R.$ 
A unital graded ring $R$ is a \emph{graded division ring} if every nonzero homogeneous element has a multiplicative inverse. If a graded division ring $R$ is commutative then $R$ is a {\em graded field}. The set 
\[\Gamma_R=\{\gamma\in\Gamma\,|\, R_\gamma\neq 0\}\]
is the {\em support} of a graded ring $R.$ A graded ring $R$ is \emph{trivially graded} if $\Gamma_R=\{\varepsilon\}.$ If $K$ is a graded division ring, $\Gamma_K$ is a subgroup of $\Gamma.$

We adopt the standard definitions of graded ring homomorphisms and isomorphisms, graded left and right $R$-modules, graded module homomorphisms, graded algebras, graded left and right free and projective modules as defined in \cite{NvO_book} and \cite{Roozbeh_book}.
If $M$ is a graded right $R$-module and $\gamma\in\Gamma,$ the $\gamma$-\emph{shifted or $\gamma$-suspended} graded right $R$-module $(\gamma)M$ is defined as the module $M$ with the $\Gamma$-grading given by $$(\gamma)M_\delta = M_{\gamma\delta}$$ for all $\delta\in \Gamma.$ 
Analogously, if $M$ is a graded left $R$-module, the $\gamma$-shifted  left $R$-module $M(\gamma)$ is the module $M$ with the $\Gamma$-grading given by $M(\gamma)_\delta = M_{\delta\gamma}$ for all $\delta\in \Gamma.$
Any finitely generated graded free right $R$-module is of the form $(\gamma_1)R\oplus\ldots\oplus (\gamma_n)R$ for $\gamma_1, \ldots,\gamma_n\in\Gamma$ and an analogous statement holds for finitely generated graded free left $R$-modules (both \cite{NvO_book} and \cite{Roozbeh_book} contain details).  

If $M$ and $N$ are graded right $R$-modules and $\gamma\in\Gamma$, then $\Hom_R(M,N)_\gamma$ denotes 
$$\Hom_R(M,N)_\gamma=\{f\in \Hom_R(M, N)\,|\,f(M_\delta)\subseteq N_{\gamma\delta}\},$$ 
then the subgroups $ \Hom_R(M,N)_\gamma$ of $\Hom_R(M,N)$ intersect trivially and $\HOM_R(M,N)$ denotes their direct sum $\bigoplus_{\gamma\in\Gamma} \Hom_R(M,N)_\gamma.$ The notation $\END_R(M)$ is used in the case if $M=N$ and $\END_R(M)$ is a $\Gamma$-graded ring. We use $\AUT_R(M)$ to denote the subring of $\END_R(M)$ consisting of invertible elements. 
If $M$ is finitely generated (which is the case we often consider), then $\Hom_R(M,N)=\HOM_R(M,N)$ for any $N$ (both \cite{NvO_book} and \cite{Roozbeh_book} contain details), $\End_R(M)=\END_R(M)$ is a $\Gamma$-graded ring, and $\Aut_R(M)=\AUT_R(M)$.

In  \cite{Roozbeh_book}, if $R$ is a $\Gamma$-graded ring and $\gamma_1,\dots,\gamma_n\in \Gamma$, then $\M_n(R)(\gamma_1,\dots,\gamma_n)$ denotes the ring of matrices $\M_n(R)$ with the $\Gamma$-grading given by  
\begin{center}
$(r_{ij})\in\M_n(R)(\gamma_1,\dots,\gamma_n)_\delta\;\;$ if $\;\;r_{ij}\in R_{\gamma_i^{-1}\delta\gamma_j}$ for $i,j=1,\ldots, n.$ 
\end{center}
This definition is different in \cite{NvO_book}: $\M_n(R)(\gamma_1,\dots,\gamma_n)$ in \cite{NvO_book} corresponds to $\M_n(R)(\gamma_1^{-1},\dots,\gamma_n^{-1})$ in \cite{Roozbeh_book}. More details on the relations between the two definitions
can be found in \cite[Section 1]{Lia_realization}. Although the definition 
from \cite{NvO_book} has been in circulation longer, some matricial representations of Leavitt path algebras involve positive instead of negative integers making the definition from \cite{Roozbeh_book} more convenient for us. So, we use the definition from \cite{Roozbeh_book}. With this definition, if $F$ is the graded free right module $(\gamma_1^{-1})R\oplus \dots \oplus (\gamma_n^{-1})R,$ then $\Hom_R(F,F)\cong_{\gr} \;\M_n(R)(\gamma_1,\dots,\gamma_n)$ as graded rings.    

We also recall \cite[Remark 2.10.6]{NvO_book} stating the first two parts in Lemma \ref{lemma_on_shifts} and \cite[Theorem 1.3.3]{Roozbeh_book}  stating part (3) for $\Gamma$ abelian. The proof of this statement generalizes to arbitrary $\Gamma.$ 

\begin{lemma}\cite[Remark 2.10.6]{NvO_book}, \cite[Theorem 1.3.3]{Roozbeh_book}.
Let $R$ be a $\Gamma$-graded ring and $\gamma_1,\ldots,\gamma_n\in \Gamma.$ 
\begin{enumerate}[\upshape(1)]
\item If $\pi$ a permutation of the set $\{1,\ldots, n\},$ then 
\begin{center}
$\M_n (R)(\gamma_1, \gamma_2,\ldots, \gamma_n)\;\cong_{\gr}\;\M_n (R)(\gamma_{\pi(1)}, \gamma_{\pi(2)} \ldots, \gamma_{\pi(n)}).$
\end{center}

\item If $\delta$ in the center of $\Gamma,$ $\M_n (R)(\gamma_1, \gamma_2, \ldots, \gamma_n)\;=\;\M_n (R)(\gamma_1\delta, \gamma_2\delta,\ldots, \gamma_n\delta).$

\item If $\delta\in\Gamma$ is such that there is an invertible element $u_\delta$ in $R_\delta,$ then  
\begin{center}
$\M_n (R)(\gamma_1, \gamma_2, \ldots, \gamma_n)\;\cong_{\gr}\;\M_n (R)(\gamma_1\delta, \gamma_2\ldots, \gamma_n).$
\end{center}
\end{enumerate}
\label{lemma_on_shifts} 
\end{lemma}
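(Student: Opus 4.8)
The plan is to prove each of the three parts by exhibiting an explicit graded ring isomorphism, in each case induced by a suitable relabeling or conjugation of the standard matrix units. For part (1), I would use the permutation $\pi$ to define the map $\varphi\colon \M_n(R)(\gamma_1,\ldots,\gamma_n)\to\M_n(R)(\gamma_{\pi(1)},\ldots,\gamma_{\pi(n)})$ by $\varphi((r_{ij})) = (r_{\pi(i)\pi(j)})$, i.e.\ simultaneous permutation of rows and columns by $\pi$; equivalently $\varphi(A)=P^{-1}AP$ where $P$ is the permutation matrix of $\pi$. This is clearly a ring isomorphism, and the only thing to check is that it respects the gradings: an entry in position $(i,j)$ of the target must lie in $R_{\gamma_{\pi(i)}^{-1}\delta\gamma_{\pi(j)}}$ to be in degree $\delta$, and the preimage entry $r_{\pi(i)\pi(j)}$ sits in position $(\pi(i),\pi(j))$ of the source, whose degree-$\delta$ condition is exactly $r_{\pi(i)\pi(j)}\in R_{\gamma_{\pi(i)}^{-1}\delta\gamma_{\pi(j)}}$ — so the two conditions coincide and $\varphi$ is graded of degree $\varepsilon$.

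For part (2), note that since $\delta$ is central, $\gamma_i^{-1}\eta\,\gamma_j = (\gamma_i\delta)^{-1}\eta\,(\gamma_j\delta)$ for every $\eta\in\Gamma$, because the two copies of $\delta$ on the right-hand side cancel after commuting past $\eta$. Hence the homogeneous component in degree $\eta$ of $\M_n(R)(\gamma_1,\ldots,\gamma_n)$ and of $\M_n(R)(\gamma_1\delta,\ldots,\gamma_n\delta)$ consist of exactly the same set of matrices, so the two graded rings are literally equal (not merely isomorphic), which is what is claimed. For part (3), let $u=u_\delta\in R_\delta$ be invertible with inverse $u^{-1}\in R_{\delta^{-1}}$, and define $\psi\colon \M_n(R)(\gamma_1,\ldots,\gamma_n)\to\M_n(R)(\gamma_1\delta,\gamma_2,\ldots,\gamma_n)$ by conjugation with the diagonal matrix $D=\operatorname{diag}(u,1,\ldots,1)$, that is $\psi(A)=D^{-1}AD$. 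Concretely this multiplies the first row on the left by $u^{-1}$ and the first column on the right by $u$, and fixes the rest. Since $D$ is invertible in $\M_n(R)$, $\psi$ is a ring isomorphism; it remains to verify the grading. In the source, $r_{ij}$ being in the degree-$\eta$ component means $r_{ij}\in R_{\gamma_i^{-1}\eta\gamma_j}$; after conjugation the $(1,j)$-entry becomes $u^{-1}r_{1j}$ which lies in $R_{\delta^{-1}\gamma_1^{-1}\eta\gamma_j}=R_{(\gamma_1\delta)^{-1}\eta\gamma_j}$, the $(i,1)$-entry becomes $r_{i1}u\in R_{\gamma_i^{-1}\eta\gamma_1\delta}=R_{\gamma_i^{-1}\eta(\gamma_1\delta)}$, the $(1,1)$-entry becomes $u^{-1}r_{11}u\in R_{(\gamma_1\delta)^{-1}\eta(\gamma_1\delta)}$, and all other entries are unchanged — these are exactly the conditions for the image to lie in the degree-$\eta$ component of the target, so $\psi$ is a graded isomorphism of degree $\varepsilon$.

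None of the three parts presents a genuine obstacle; the work is entirely bookkeeping with the grading conventions of $\M_n(R)(\gamma_1,\ldots,\gamma_n)$ from \cite{Roozbeh_book}. The one point requiring mild care is keeping the left/right placement of the conjugating units consistent with the convention $r_{ij}\in R_{\gamma_i^{-1}\delta\gamma_j}$ (as opposed to the \cite{NvO_book} convention), since using $D$ instead of $D^{-1}$ on the wrong side would produce the shift $\delta^{-1}$ rather than $\delta$; once the convention is fixed the computation is forced. Since parts (1) and (2) are quoted verbatim from the literature and the generalization of part (3) to nonabelian $\Gamma$ only uses that the relevant products of group elements are manipulated formally (no commutativity is invoked in the argument above), the proof of \cite[Theorem 1.3.3]{Roozbeh_book} goes through unchanged for arbitrary $\Gamma$, as asserted in the statement.
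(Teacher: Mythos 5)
Your proof is correct and follows exactly the standard argument that the paper delegates to \cite[Remark 2.10.6]{NvO_book} and \cite[Theorem 1.3.3]{Roozbeh_book}: conjugation by a permutation matrix for (1), cancellation of the central $\delta$ in $(\gamma_i\delta)^{-1}\eta(\gamma_j\delta)$ for (2), and conjugation by $\operatorname{diag}(u_\delta,1,\ldots,1)$ for (3), with the degree bookkeeping done consistently with the convention $r_{ij}\in R_{\gamma_i^{-1}\eta\gamma_j}$. You also correctly note that no commutativity of $\Gamma$ is used, which is precisely the observation the paper makes in asserting that part (3) extends to arbitrary $\Gamma$.
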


\subsection{Strongly graded rings, crossed products and skew group rings}\label{subsection_strongly_graded_and_cross}
A graded ring $R$ is {\em strongly graded} if $R_\gamma R_\delta=R_{\gamma\delta}$ for every $\gamma,\delta\in \Gamma.$ If $R$ is unital with the identity $1$, then $R$ is strongly graded if and only if $1\in R_\gamma R_{\gamma^{-1}}$ for any $\gamma\in\Gamma$ (see \cite[Proposition 1.1.1]{NvO_book} or \cite[Proposition 1.1.15]{Roozbeh_book}).

A unital graded ring $R$ is a {\em crossed product} if there is an invertible element in $R_\gamma$ for any $\gamma\in\Gamma$. 
In this case, $R$ is also strongly graded. If $R$ is a graded ring and $M$ and $N$ are graded $R$-modules, $M$ and $N$ are {\em graded weakly isomorphic} if $M\oplus M'\cong_{\gr} N^n$ and $N\oplus N'\cong_{\gr} M^m$ for some positive integers $n,m$ and some graded modules $M'$ and $N'.$ We recall \cite[Theorems 2.10.1 and 2.10.2]{NvO_book}.

\begin{theorem} \cite[Theorems 2.10.1 and 2.10.2]{NvO_book}. Let $R$ be a $\Gamma$-graded ring and $M$ a graded $R$-module. 
\begin{enumerate}[\upshape(1)]
\item The graded ring $\END_R(M)$ is strongly graded if and only if $M$ and $(\gamma)M$ are graded weakly isomorphic for any $\gamma\in \Gamma.$ Thus, if $R$ is unital, $R$ is strongly graded if and only if $R$ and $(\gamma)R$ are graded weakly isomorphic for any $\gamma\in \Gamma.$  
 
\item The graded ring $\END_R(M)$ is a crossed product if and only if $M\cong_{\gr}(\gamma)M$ for any $\gamma\in\Gamma.$ Thus, if $R$ is unital, $R$ is a crossed product if and only if $R\cong_{\gr}(\gamma)R$ for any $\gamma\in\Gamma.$ 
\end{enumerate}
\label{NvO_theorem}
\end{theorem}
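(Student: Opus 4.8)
The plan is to exploit the natural identification of the graded pieces of $S:=\END_R(M)$ with groups of graded (degree-preserving) homomorphisms between shifts of $M$. Reinterpreting the domain of an $R$-module map with a shifted grading changes nothing about the underlying function, so directly from the definitions one gets, for each $\gamma\in\Gamma$, that $S_\gamma=\Hom_R(M,M)_\gamma$ coincides, as a set of functions, with $\Hom_R((\gamma^{-1})M,M)_\varepsilon$ and with $\Hom_R(M,(\gamma)M)_\varepsilon$; moreover, under these identifications the multiplication $S_\gamma S_\delta\to S_{\gamma\delta}$ becomes composition of the corresponding graded maps. I would isolate this as the one computational observation on which both parts rest.

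For part (2): the ring $S$ is a crossed product exactly when each $S_\gamma$ contains an element invertible in $S$, that is, an $R$-automorphism $u$ of $M$ with $u\in S_\gamma$; its inverse then necessarily lies in $S_{\gamma^{-1}}$. Via the identification above, such a $u$ is precisely a graded $R$-isomorphism $M\to(\gamma)M$, and conversely any graded isomorphism $M\cong_{\gr}(\gamma)M$ yields an invertible element of $S_\gamma$. Hence $S$ is a crossed product iff $M\cong_{\gr}(\gamma)M$ for every $\gamma\in\Gamma$. The assertion about $R$ is the special case $M=R_R$, using $\END_R(R_R)\cong_{\gr}R$ and that $(\gamma)R_R$ is the $\gamma$-shift of $R$.

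For part (1): recall that the unital ring $S$ (with identity $1_M$) is strongly graded iff $1_M\in S_\gamma S_{\gamma^{-1}}$ for every $\gamma$. Writing $1_M=\sum_{i=1}^{k}f_ig_i$ with $f_i\in S_\gamma$ and $g_i\in S_{\gamma^{-1}}$, the identification turns the $g_i$ into a graded map $g\colon M\to\big((\gamma^{-1})M\big)^{k}$ and the $f_i$ into a graded map $f\colon\big((\gamma^{-1})M\big)^{k}\to M$ with $fg=1_M$; reading this in reverse, $1_M\in S_\gamma S_{\gamma^{-1}}$ is equivalent to $M$ being a graded direct summand of a finite power of $(\gamma^{-1})M$, hence, after shifting by $\gamma$, to $(\gamma)M$ being a graded direct summand of a finite power of $M$. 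Thus $S$ is strongly graded iff for every $\gamma\in\Gamma$ the module $(\gamma)M$ is a graded direct summand of some $M^{n}$. Finally, observe that once this holds for all $\gamma$ at once, applying it to $\gamma^{-1}$ and shifting by $\gamma$ shows $M$ is also a graded direct summand of a finite power of $(\gamma)M$; so the stated condition that $M$ and $(\gamma)M$ are graded weakly isomorphic for every $\gamma$ is equivalent to it, and the statement about $R$ is again the case $M=R_R$.

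The step I expect to require the most care — rather than being a genuine obstacle — is keeping the shift conventions straight (here $(\gamma)M_\delta=M_{\gamma\delta}$, together with the $\M_n(R)(\gamma_1,\dots,\gamma_n)$ convention from \cite{Roozbeh_book}) so that the identifications $S_\gamma\cong\Hom_R((\gamma^{-1})M,M)_\varepsilon\cong\Hom_R(M,(\gamma)M)_\varepsilon$ and their compatibility with composition come out with the right elements; the remaining work is the routine packaging and unpackaging of finite families of homomorphisms as maps into and out of finite direct powers.
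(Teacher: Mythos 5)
Your argument is correct. Note that the paper does not prove this statement at all --- it is quoted verbatim from N\u ast\u asescu--van Oystaeyen \cite[Theorems 2.10.1 and 2.10.2]{NvO_book} --- and your proof is essentially the standard one found there: the identification $\END_R(M)_\gamma=\Hom_R(M,(\gamma)M)_\varepsilon=\Hom_R((\gamma^{-1})M,M)_\varepsilon$, invertible homogeneous elements of degree $\gamma$ as graded isomorphisms $M\cong_{\gr}(\gamma)M$ for part (2), and the decomposition $1_M=\sum f_ig_i$ as a graded split surjection $((\gamma^{-1})M)^k\to M$ for part (1), with the symmetry in $\gamma\leftrightarrow\gamma^{-1}$ upgrading the one-sided summand condition to graded weak isomorphism.
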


If $M$ is a graded module over a graded ring $R$ and $H$ is the set of homogeneous elements in $\AUT_R(M),$ the equivalent conditions from part (2) of the above theorem  are equivalent with
\[\xymatrix{\{1_M\}\ar[r]& \AUT_R(M)_\varepsilon \ar[r]& \AUT_R(M)\cap H\ar[r]&\Gamma\ar[r]& \{\varepsilon\}}\]
being exact (in the category of groups) where the first two maps are inclusions and the third map is the degree map (\cite[Proposition 1.4.1]{NvO_book}). 

If $R$ is a unital ring which is a crossed product, then $R$ is a {\em skew group ring} if the above sequence splits for $M=R$ (see \cite[Remark 1.4.3]{NvO_book}). In this case, the splitting uniquely determines a map $\sigma$ from $\Gamma$ to the set of invertible homogeneous elements of $R.$ Such $\sigma$ defines $\phi:\Gamma\to \Aut_{R_\varepsilon}(R_\varepsilon)$ by mapping $\gamma$ to the conjugation by $\sigma(\gamma).$ If $R_\varepsilon \ast_\phi \Gamma$ denotes a free $R_\varepsilon$-module with the basis $\Gamma,$ then the multiplication $(r\gamma)(s\delta)=(r\phi(\gamma)(s))\gamma\delta$ for $r,s\in R_\varepsilon$ and $\gamma,\delta\in \Gamma$ makes $R_\varepsilon \ast_\phi \Gamma$ into a $\Gamma$-graded ring such that $R\cong_{\gr} R_\varepsilon \ast_\phi \Gamma.$ 

A skew group ring $R$ is a {\em group ring} if  $\{1\}$ is the image of $\sigma$. In this case we use $R_\varepsilon[\Gamma]$ for $R_\varepsilon \ast_\phi \Gamma.$ 

Recall that a graded unital ring $R$ is {\em graded unit-regular} if for every $a\in R_\gamma,$ there is an invertible element $u$ in $R_{\gamma^{-1}}$ such that $a=aua.$ If this holds, then every nonzero component $R_\gamma$ of $R$ has an invertible element (this is direct to check or see \cite[Lemma 2.5]{Lia_cancellation}). Hence, the following holds. 

\begin{proposition} A strongly graded, unital ring which is graded unit-regular is a crossed product. 
\label{ur_and_sg_implies_cross_product}
\end{proposition}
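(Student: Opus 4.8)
The plan is to unwind the two relevant definitions recalled just above the statement. Recall that a strongly graded unital ring $R$ is a crossed product precisely when each component $R_\gamma$ with $\gamma\in\Gamma_R$ contains an invertible element; and since $R$ is strongly graded, $\Gamma_R=\Gamma$, so it suffices to produce an invertible element in every $R_\gamma$. The hypothesis of graded unit-regularity almost gives this for free: as noted in the sentence preceding the proposition (with reference to \cite[Lemma 1.5]{Lia_cancellation}), graded unit-regularity of $R$ forces every nonzero homogeneous component to contain an invertible element.

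So the argument I would write is essentially two lines. First, since $R$ is strongly graded and unital, $1\in R_\gamma R_{\gamma^{-1}}$ for every $\gamma\in\Gamma$, which in particular forces $R_\gamma\neq 0$ for every $\gamma\in\Gamma$; that is, $\Gamma_R=\Gamma$. Second, since $R$ is graded unit-regular, every nonzero component $R_\gamma$ has an invertible element — this is the observation recorded right before the proposition. Combining the two, for each $\gamma\in\Gamma$ the component $R_\gamma$ is nonzero and hence contains an invertible element, which is exactly the definition of $R$ being a crossed product given in Subsection \ref{subsection_strongly_graded_and_cross}.

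Alternatively, to make the proof self-contained without invoking the cited lemma, I would argue directly: fix $\gamma\in\Gamma$; strong grading gives $1\in R_\gamma R_{\gamma^{-1}}$, so pick a homogeneous element $a\in R_\gamma$ that appears with a nonzero partner in such an expression (in particular $a\neq 0$); by graded unit-regularity there is an invertible $u\in R_{\gamma^{-1}}$ with $a=aua$; then $e=ua$ is a homogeneous idempotent in $R_\varepsilon$, and one shows $ue$ together with the invertibility of $u$ produces an invertible element of $R_\gamma$ — but spelling this out is more delicate than simply citing the already-noted fact, so I would not pursue it in the final write-up unless the referee asks.

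The only potential subtlety — and it is minor — is the direction of indices: one must be careful that graded unit-regularity, which pairs $R_\gamma$ with an invertible element of $R_{\gamma^{-1}}$, is being applied to the correct component, and that an invertible element of $R_{\gamma^{-1}}$ has its inverse in $R_\gamma$, so that producing invertibles in all $R_{\gamma^{-1}}$ is the same as producing them in all $R_\gamma$ as $\gamma$ ranges over $\Gamma$. Since $\Gamma$ is a group this is automatic, so there is really no obstacle here; the proposition is a short bookkeeping consequence of the definitions and the lemma recalled immediately before it.
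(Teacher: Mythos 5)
Your proof is correct and is essentially identical to the paper's own argument: strong grading forces every component $R_\gamma$ to be nonzero, and graded unit-regularity (via the observation recorded just before the proposition) then supplies an invertible element in each, which is the definition of a crossed product. The alternative self-contained sketch is unnecessary; the two-line version is exactly what the paper does.
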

\begin{proof}
Let $R$ be a strongly graded, unital and graded unit-regular ring. Since $R$ is graded unit-regular, if $R_\gamma$ is nonzero for some $\gamma\in\Gamma$, then $R_\gamma$ has an invertible element. On the other hand, since $R$ is strongly graded, the ring identity is in $R_\gamma R_{\gamma^{-1}}$ for every $\gamma\in\Gamma,$ so $R_\gamma$ is nonzero for every $\gamma\in \Gamma.$ Thus, $R_\gamma$ has an invertible element for every $\gamma\in \Gamma.$ So, $R$ is a crossed product. 
\end{proof}

The converse of this proposition does not have to hold as the following example shows. Let $\Gamma$ be the infinite cyclic group on a generator $x$ and $R$ be the ring of Laurent polynomials $\Zset[\Gamma]=\Zset[x,x^{-1}]$ graded by $\Gamma$ so that $R_{x^n}=\{kx^n |k\in \Zset\}.$ Then $R$ is a group ring so it is a crossed product. However, $R$ is not graded unit regular since $2\in R_\varepsilon$ is such that $2\neq 2u2$ for any invertible element $u\in R_\varepsilon.$ 

While the converse of Proposition \ref{ur_and_sg_implies_cross_product} does not hold in general, Corollary \ref{unit_reg_corollary} shows that the converse holds for unital Leavitt path algebras.

\subsection{Pre-ordered \texorpdfstring{$\Gamma$}{TEXT}-groups}\label{subsection_ordered_groups}

If $\Gamma$ is a group and $M$ an additive monoid with a left action of $\Gamma$ such that $\gamma(g_1+g_2)=\gamma g_1+\gamma g_2$ for all $\gamma\in \Gamma$ and $g_1,g_2\in M,$ we say that $M$ is a {\em $\Gamma$-monoid}. If $\Gamma$ is a group and $G$ an abelian group with a left action of $\Gamma$ which satisfies this property, $G$ is a {\em $\Gamma$-group}. 

Let $\geq$ be a reflexive and transitive relation (a pre-order) on a $\Gamma$-monoid $M$ ($\Gamma$-group $G$) such that $g_1\geq g_2$ implies $g_1 + h\geq g_2 + h$ and $\gamma g_1 \geq \gamma g_2$ for all $g_1, g_2, h$ in $M$ (in $G$) and $\gamma\in \Gamma.$ We say that such monoid $M$ is a {\em pre-ordered $\Gamma$-monoid} and that such a group $G$ is a {\em pre-ordered $\Gamma$-group}. 

If $G$ is a pre-ordered $\Gamma$-group, the set $G^+=\{ x\in G\,|\, x\geq 0\},$ called the {\em positive cone} of $G,$ is a $\Gamma$-monoid. Conversely, any additively closed subset $M$ of $G$ which contains 0 and is closed under the action of $\Gamma$ is a $\Gamma$-monoid and it defines a pre-order $\Gamma$-group structure on $G$ with $G^+=M$. The positive cone $G^+$ is {\em strict} if $G^+\cap (-G^+)=\{0\}.$ This is equivalent with $\leq$ being an order in which case $G$ is an {\em ordered $\Gamma$-group}. If $\Zset^+$ is the set of nonnegative integers, $\Delta$ a subgroup of $\Gamma,$ and $\Gamma/\Delta$ the set of left cosets, $\Zset[\Gamma/\Delta]$ is an ordered $\Gamma$-group with the order given by $\sum k_\gamma\gamma\Delta\geq \sum m_\gamma\gamma\Delta$ if $k_\gamma\geq m_\gamma$ for all $\gamma.$ The monoid $\Zset^+[\Gamma/\Delta]$ is the strict positive cone of this order.  
 
An element $u$ of a pre-ordered $\Gamma$-monoid $M$ is an \emph{order-unit} if for any $x\in M,$ $x\leq au$ for some $0\neq a\in \Zset^+[\Gamma].$ An element $u$ of a pre-ordered $\Gamma$-group $G$ is an \emph{order-unit} if $u\in G^+$ and for any $x\in G$, $x\leq au$ for some $0\neq a\in \Zset^+[\Gamma].$ In this case, $-x\leq bu$ for some $0\neq b\in \Zset^+[\Gamma]$ so that $-bu\leq x$ and thus $-(a+b)u\leq -bu\leq x\leq au\leq (a+b)u.$ It is direct to check that an order-unit of $G^+$ is an order-unit of $G.$ 

If $G$ and $H$ are pre-ordered $\Gamma$-groups, $f: G\to H$ is a homomorphism of pre-ordered $\Gamma$-groups if $f$ is a group homomorphism which is order preserving ($f(G^+)\subseteq H^+$) and  $\Gamma$-equivariant. If $G$ and $H$ also have order-units $u$ and $v$ respectively, and such $f$ preserves them (i.e.  $f(u)=v$), we write $f:(G, u)\to (H, v).$ If such $f$ is a bijection, we write $f:(G, u)\cong (H, v).$

Consider the ordered $\Gamma$-group $\Zset[\Gamma/\Delta]$ from the example above for some subgroup $\Delta$ of $\Gamma$. One directly checks that $\Delta$ is an order-unit of $\Zset[\Gamma/\Delta].$ For $a\in \Zset[\Gamma/\Delta],$ let 
$$\Stab(a)=\{\gamma\in\Gamma\, |\, \gamma a=a \}\mbox{ and }O(a)=\{\gamma a\in \Zset[\Gamma/\Delta]\, |\,\gamma\in\Gamma\}$$
denote the stabilizer subgroup and the orbit of $a.$ For $\gamma\in\Gamma,$ $\Stab(\gamma\Delta)=\gamma\Delta\gamma^{-1}$  and $O(\gamma\Delta)=\Gamma/\Delta.$

\subsection{The Grothendieck \texorpdfstring{$\Gamma$}{TEXT}-group}\label{subsection_Grothendieck_graded}

If $R$ is a $\Gamma$-graded unital ring, let $\V^{\Gamma}(R)$ denote the monoid of graded isomorphism classes $[P]$ of finitely generated graded projective right $R$-modules $P$ with the direct sum as the addition operation, the usual pre-order, and the left $\Gamma$-action given by 
\[(\gamma, [P])\mapsto [(\gamma^{-1})P]\] making $\V^{\Gamma}(R)$ a pre-ordered $\Gamma$-monoid. The $\Gamma$-monoid structure can also be defined using the left modules instead of the right, in which case the $\Gamma$ action is given by $(\gamma, [P])\mapsto [P(\gamma)],$ and the two approaches are equivalent by \cite[Section 2.4]{NvO_book} or \cite[Section 1.2.3]{Roozbeh_book}. The $\Gamma$-monoid $\V^{\Gamma}(R)$ can also be represented using the conjugation classes of homogeneous idempotent matrices as has been done in \cite[Section 3.2]{Roozbeh_book} for abelian groups $\Gamma$ and in \cite[Section 1.3]{Lia_realization} for arbitrary $\Gamma.$  

The \emph{Grothendieck $\Gamma$-group} $K_0^{\Gamma}(R)$ is the group completion of the $\Gamma$-monoid $\V^{\Gamma}(R)$ which naturally inherits the action of $\Gamma$ from $\V^{\Gamma}(R)$. The image of $\V^{\Gamma}(R)$ under the natural map $\V^{\Gamma}(R)\to K_0^{\Gamma}(R)$ is a positive cone making $K_0^{\Gamma}(R)$ into a pre-ordered $\Gamma$-group. The element $[R]$ is an order-unit (see \cite[Example 3.6.3]{Roozbeh_book}). If $\Gamma$ is trivial, $K_0^{\Gamma}(R)$ is the usual $K_0$-group. In \cite{Roozbeh_book}, the author uses ``the graded Grothendieck group'' instead of ``the Grothendieck $\Gamma$-group'' and $K_0^{\gr}(R)$ instead of $K_0^{\Gamma}(R).$ In this paper, we use  ``the Grothendieck $\Gamma$-group'' since the group $K_0^{\Gamma}(R)$  is not itself graded by $\Gamma.$

The proof of \cite[Proposition 1.3.16]{Roozbeh_book} holds even if $\Gamma$ is not necessarily abelian as was pointed out in \cite[Section 1.4]{Lia_realization}. Thus, $(\gamma)R$ is graded isomorphic to $(\delta)R$ if and only if there is invertible element $a\in R_{\gamma\delta^{-1}}$ ($a^{-1}$ is then necessarily in $R_{\delta\gamma^{-1}}).$ Hence, if $K$ is a $\Gamma$-graded division ring,
\begin{center}
$(\gamma)K\cong_{\gr}(\delta)K$ iff $\gamma\delta^{-1}\in \Gamma_K$  iff $\gamma^{-1}\Gamma_K=\delta^{-1}\Gamma_K$ 
\end{center}
for any $\gamma,\delta\in\Gamma$ by \cite[Corollary 1.3.17]{Roozbeh_book}. So, $(K_0^{\Gamma}(K), [K])\cong (\Zset[\Gamma/\Gamma_K], \Gamma_K)$ by  the map given by 
$$[\bigoplus_{i=1}^n (\gamma_i^{-1})K^{n_i}]=\sum_{i=1}^n n_i\gamma_i[K] \mapsto \sum_{i=1}^n n_i\, \gamma_i\Gamma_K.$$  

If $R=\M_n(K)(\gamma_1,\ldots, \gamma_n)$ for some $\gamma_1,\ldots, \gamma_n\in \Gamma$ and $e_{ij}$ is the $(i,j)$-th standard graded matrix unit, then $[R]=[\bigoplus_{i=1}^n e_{ii}R]=\sum_{i=1}^n \gamma_i\gamma_1^{-1}[e_{11}R].$ Also, $[e_{ii}R]\in \V^\Gamma(R)$ corresponds to $[e_{ii}(\bigoplus_{i=1}^n(\gamma_i^{-1})K)]=[(\gamma_i^{-1})K]\in \V^\Gamma(K)$ under the isomorphism induced by the equivalence from \cite[Corollary 2.1.2]{Roozbeh_book} (also \cite[Section 1.4]{Lia_realization}). Thus, $(K_0^{\Gamma}(R)), [R])\cong (\Zset[\Gamma/\Gamma_K], \sum_{i=1}^n \gamma_i\Gamma_K).$

\subsection{Strong order-unit}\label{subsection_strong_unit}
If $M$ is a pre-ordered $\Gamma$-monoid and $u\in M,$ we say that $u$ is a {\em strong order-unit} of $M$ if for any $x\in M$, there is $0\neq n\in\Zset^+$ such that $x\leq nu$ or, equivalently, if $u$ is an order-unit of $M$ if $M$ is considered as a pre-ordered $\{\varepsilon\}$-monoid. An element $u$ of a pre-ordered $\Gamma$-group $G$ is a  {\em strong order-unit} of $G$ if 
$u\in G^+$ and for any $x\in G$, there is $0\neq n\in\Zset^+$ such that $x\leq nu.$ The following implications justify our terminology.
\begin{center}
\begin{tabular}{lllclll}
$R$ is a && graded ring & $\Longrightarrow$ & $[R]$ is an&& order-unit of $\V^\Gamma(R)$ (of $K_0^\Gamma(R)$) \\
$R$ is a &{\em strongly}& graded ring & $\Longrightarrow$ & $[R]$ is a &{\em strong}& order-unit of $\V^\Gamma(R)$ (of $K_0^\Gamma(R)$)\\
\end{tabular}
\end{center} 
The next proposition shows the second implication above as well as its converse.  
\begin{proposition}
If $R$ is a $\Gamma$-graded unital ring, then  $R$ is strongly graded if and only if  $[R]$ is a strong order-unit of $\V^\Gamma(R)$ (equivalently, of $K_0^\Gamma(R)$). 
\label{strong_order_unit_prop}
\end{proposition}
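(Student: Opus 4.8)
The plan is to reduce everything to the criterion recalled in Subsection~\ref{subsection_strongly_graded_and_cross}, namely that a unital $\Gamma$-graded ring $R$ is strongly graded if and only if $1\in R_\gamma R_{\gamma^{-1}}$ for every $\gamma\in\Gamma$, and to connect this condition with the existence of graded direct summands inside finitely generated graded free modules \emph{with trivial shifts}. Concretely, I would establish the chain: $R$ is strongly graded $\iff$ for every $\gamma\in\Gamma$ the module $(\gamma)R$ is a graded direct summand of $R^n$ for some $n$ $\iff$ every finitely generated graded projective right $R$-module is a graded direct summand of some $R^n$ $\iff$ $[R]$ is a strong order-unit of $\V^\Gamma(R)$. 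The last step is just a reformulation, since $[P]\le m[R]$ holds in $\V^\Gamma(R)$ exactly when $P$ is a graded direct summand of $R^m$; and the middle step holds because every finitely generated graded projective module is a graded direct summand of some finitely generated graded free module $(\gamma_1)R\oplus\cdots\oplus(\gamma_k)R$, so if each $(\gamma_i)R$ is a graded direct summand of $R^{n_i}$ then $P$ is a graded direct summand of $R^{n_1+\cdots+n_k}$.

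The core of the argument is thus the equivalence, for a fixed $\gamma$, of $1\in R_\gamma R_{\gamma^{-1}}$ with $(\gamma)R$ being a graded direct summand of some $R^n$. For the forward direction, write $1=\sum_{i=1}^n b_ia_i$ with $b_i\in R_\gamma$ and $a_i\in R_{\gamma^{-1}}$, and define $\iota\colon(\gamma)R\to R^n$ by $\iota(r)=(a_1r,\dots,a_nr)$ and $\pi\colon R^n\to(\gamma)R$ by $\pi(x_1,\dots,x_n)=\sum_{i=1}^n b_ix_i$. Since $a_i\in R_{\gamma^{-1}}$ and $b_i\in R_\gamma$, a degree count gives that $\iota$ and $\pi$ are graded right $R$-module homomorphisms of degree $\varepsilon$, and $\pi\iota(r)=\big(\sum_i b_ia_i\big)r=r$, so $(\gamma)R$ is a graded direct summand of $R^n$. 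Conversely, given graded degree-$\varepsilon$ maps $\iota\colon(\gamma)R\to R^n$ and $\pi\colon R^n\to(\gamma)R$ with $\pi\iota=\mathrm{id}$, let $v_0\in(\gamma)R$ be the element $1\in R$; it is homogeneous of degree $\gamma^{-1}$ and freely generates $(\gamma)R$ as a right $R$-module. Comparing degrees, $\iota(v_0)=(a_1,\dots,a_n)$ with $a_i\in R_{\gamma^{-1}}$ and $\pi(e_i)=v_0b_i$ with $b_i\in R_\gamma$, where $e_i$ is the $i$-th standard basis vector of $R^n$; then $v_0=\pi\iota(v_0)=\sum_i\pi(e_i)a_i=v_0\sum_ib_ia_i$ forces $\sum_ib_ia_i=1\in R_\gamma R_{\gamma^{-1}}$. (One could instead deduce this equivalence from Theorem~\ref{NvO_theorem}(1), since $R$ and $(\gamma)R$ are graded weakly isomorphic exactly when both $(\gamma)R$ and $(\gamma^{-1})R$ are graded direct summands of trivially shifted finitely generated graded free modules, but the splitting above is self-contained.)

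Finally I would pass between $\V^\Gamma(R)$ and $K_0^\Gamma(R)$. One implication is immediate: the natural map $\V^\Gamma(R)\to K_0^\Gamma(R)$ is a $\Gamma$-monoid homomorphism which is order preserving, sends $[R]$ to $[R]$, and has the positive cone of $K_0^\Gamma(R)$ as image, so a strong order-unit of $\V^\Gamma(R)$ is mapped to one of $K_0^\Gamma(R)$. For the converse, the inequality $[(\gamma)R]\le n[R]$ in $K_0^\Gamma(R)$ unwinds, via the definition of the group completion, to $(\gamma)R\oplus S\oplus C\cong_{\gr}R^n\oplus C$ for some finitely generated graded projective modules $S$ and $C$; absorbing $C$ into a finitely generated graded free module exhibits $(\gamma)R$ as a graded direct summand of $R^n\oplus F$ for a finitely generated graded free module $F$. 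I would then run the degree-counting extraction of the previous paragraph inside this larger module and combine it with the fact that, by the $\Gamma$-action on $K_0^\Gamma(R)$, \emph{every} $[(\delta)R]$ is likewise a strong order-unit, so as to recover $1\in R_\gamma R_{\gamma^{-1}}$. I expect this last step --- upgrading the merely stable graded isomorphism provided by $K_0^\Gamma(R)$ to genuine direct-summand information about trivially shifted free modules --- to be the main obstacle; the rest is routine bookkeeping with the $\Gamma$-grading.
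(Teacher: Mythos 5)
Your argument is correct where it is complete, and for the core equivalence it takes a genuinely different route from the paper's. You work directly from the criterion $1\in R_\gamma R_{\gamma^{-1}}$ and show, by explicit splitting maps and a degree count, that this condition (for a fixed $\gamma$) is equivalent to $(\gamma)R$ being a graded direct summand of some $R^n$, and then absorb arbitrary finitely generated graded projectives through a shifted free cover; this identifies being strongly graded with $[R]$ being a strong order-unit of $\V^\Gamma(R)$ in a self-contained way, avoiding both Dade's theorem and Theorem~\ref{NvO_theorem}. The paper instead proves the forward direction via $P\cong_{\gr}P_\varepsilon\otimes_{R_\varepsilon}R$, reducing to the ungraded fact $[P_\varepsilon]\le n[R_\varepsilon]$ in $\V(R_\varepsilon)$, and the converse via Theorem~\ref{NvO_theorem}(1): the strong order-unit hypothesis gives $(\delta)[R]\le l[R]$ and, applied to $\delta^{-1}$ and then shifting, $[R]\le k(\delta)[R]$, i.e., $R$ and $(\delta)R$ are graded weakly isomorphic. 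So your proof is more elementary and makes the mechanism ($1\in R_\gamma R_{\gamma^{-1}}$) visible, while the paper's is shorter. Concerning the step you flag as the main obstacle, recovering strong gradedness from the $K_0^\Gamma(R)$ hypothesis: the paper does not carry this out either --- its converse is run entirely from the $\V^\Gamma(R)$ hypothesis and the parenthetical is used only in the harmless direction --- and your suspicion that the lift of $[(\gamma)R]\le n[R]$ from $K_0^\Gamma(R)$ to $\V^\Gamma(R)$ is not formal is justified. The lift does go through whenever $\V^\Gamma(R)$ is cancellative (then $n[R]-[P]$ lying in the positive cone pulls back to $P\oplus S\cong_{\gr}R^n$, and your degree count finishes), and this covers every use of the proposition in the paper; in complete generality it can fail, e.g.\ $R=\End_K(V)$ for an infinite-dimensional vector space $V$, trivially graded by a nontrivial group $\Gamma$, satisfies $K_0^\Gamma(R)=0$ (one checks $K_0(\End_K(V))=0$ and the grading is trivial), so $[R]$ is vacuously a strong order-unit of $K_0^\Gamma(R)$ although $R$ is not strongly graded. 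Thus you have matched the content the paper actually proves; to settle the parenthetical $K_0$-statement one should either add a cancellation-type hypothesis on $\V^\Gamma(R)$ or, as the paper implicitly does, treat the $\V^\Gamma(R)$ formulation as the operative one.
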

\begin{proof}
If $R$ is strongly graded and $[P]\in \V^\Gamma(R),$ then $[P_\varepsilon]\in \V(R_\varepsilon)$ so $[P_\varepsilon]\leq n [R_\varepsilon]$ for some $0\neq n\in\Zset^+.$ As $[P]=[P_\varepsilon\otimes_{R_\varepsilon}R]$ and $[R]=[R_\varepsilon\otimes_{R_\varepsilon}R]$ (see \cite[Theorem 1.5.1]{Roozbeh_book}) the relation $[P_\varepsilon]\leq n [R_\varepsilon]$ implies $[P]\leq n [R].$ 

By Theorem \ref{NvO_theorem}, to prove the converse, it is sufficient to show that for every $\delta\in\Gamma,$ $[R] \leq k(\delta)[R]$ and $(\delta)[R]\leq l[R]$ hold in $\V^\Gamma(R)$ for some positive integers $k$ and $l$. If $[R]$ is a strong order-unit of $\V^\Gamma(R)$ and $\delta\in \Gamma$ is arbitrary, then $(\delta)[R]\leq l[R]$ holds for some positive integer $l$. Also,  $(\delta^{-1})[R]\leq k[R]$ for some positive integer $k$ and so  $[R]\leq k(\delta)[R].$
\end{proof}

\subsection{Leavitt path algebras}\label{subsection_LPAs}
Let $E$ be a directed graph and let $E^0$ denote the set of vertices, $E^1$ the set of edges, and $\so$ and $\ra$ the source and the range maps. The graph $E$ is {\em row-finite} if $\so^{-1}(v)$ is finite for every $v\in E^0$ and $E$ is {\em finite} if both $E^0$ and $E^1$ are finite. A vertex $v$ is a {\em sink} if $\so^{-1}(v)$ is empty and $v$ is {\em regular} if $\so^{-1}(v)$ is finite and nonempty. A {\em cycle} is a closed path such that different edges in the path have different sources. A cycle {\em has an exit} if a vertex on the cycle emits an edge outside of the cycle. The graph $E$ is {\em acyclic} if there are no cycles and {\em no-exit} if no cycle has an exit.  

An {\em infinite path} of a graph $E$ is a sequence of edges $e_1e_2\ldots$ such that $\ra(e_i)=\so(e_{i+1})$ for $i=1,2,\ldots$. An infinite path is an \emph{infinite sink} if none of its vertices emits more than one edge or is on a cycle. An infinite path \emph{ends in a sink} if there is a positive integer $n$ such that the subpath $e_ne_{n+1}\hdots$ is an infinite sink. An infinite path \emph{ends in a cycle} if there is a positive integer $n$ such that the subpath $e_ne_{n+1}\hdots$ is equal to the path $cc\hdots$ for some cycle $c.$ 

If $K$ is any field, the \emph{Leavitt path algebra} $L_K(E)$ of $E$ over $K$ is a free $K$-algebra generated by the set  $E^0\cup E^1\cup\{e^*\ |\ e\in E^1\}$ such that for all vertices $v,w$ and edges $e,f,$

\begin{tabular}{ll}
(V)  $vw =0$ if $v\neq w$ and $vv=v,$ & (E1)  $\so(e)e=e\ra(e)=e,$\\
(E2) $\ra(e)e^*=e^*\so(e)=e^*,$ & (CK1) $e^*f=0$ if $e\neq f$ and $e^*e=\ra(e),$\\
(CK2) $v=\sum_{e\in \so^{-1}(v)} ee^*$ for each regular vertex $v.$ &\\
\end{tabular}

By the first four axioms, every element of $L_K(E)$ can be represented as a sum of the form $\sum_{i=1}^n a_ip_iq_i^*$ for some $n$, paths $p_i$ and $q_i$, and elements $a_i\in K,$ for $i=1,\ldots,n.$ Using this representation, it is direct to see that $L_K(E)$ is a unital ring if and only if $E^0$ is finite in which case the sum of all vertices is the identity. For more details on this and other basic properties, see \cite{LPA_book}.

A Leavitt path algebra is naturally graded by the group of integers $\Zset$ so that the $n$-component $L_K(E)_n$ is  the $K$-linear span of the elements $pq^*$ for paths $p, q$ with $|p|-|q|=n$ where $|p|$ denotes the length of a path $p.$ One can grade a Leavitt path algebra by any group $\Gamma$ when considering a function $w:E^1\to \Gamma,$ called the {\em weight} function (see \cite[Section 1.6.1]{Roozbeh_book}).  

If $K$ is a trivially $\Zset$-graded field, let $K[x^m, x^{-m}]$ be the graded field of Laurent polynomials $\Zset$-graded by $K[x^m, x^{-m}]_{mk}=Kx^{mk}$ and $K[x^m, x^{-m}]_{n}=0$ if $m$ does not divide $n.$  
By \cite[Proposition 5.1]{Roozbeh_Lia_Ultramatricial}, if $E$ is a finite no-exit graph, then $L_K(E)$ is graded isomorphic to 
$$R=\bigoplus_{i=1}^k \M_{k_i} (K)(\gamma_{i1}\ldots,\gamma_{ik_i}) \oplus \bigoplus_{j=1}^n \M_{n_j} (K[x^{m_j},x^{-m_j}])(\delta_{j1}, \ldots, \delta_{jn_j})$$
where $k$ is the number of sinks, $k_i$ is the number of paths ending in the $i$-th sink for $i=1,\ldots, k,$ and $\gamma_{il}$ is the length of the $l$-th path ending in the $i$-th sink for $l=1,\ldots, k_i$ and $i=1,\ldots, k.$ In the second term, $n$ is the number of cycles, $m_j$ is the length of the $j$-th cycle for $j=1,\ldots, n,$ $n_j$ is the number of paths which do not contain the cycle indexed by $j$ and which end in a fixed but arbitrarily chosen vertex of the cycle, and $\delta_{jl}$ is the length of the $l$-th path ending in the fixed vertex of the $j$-th cycle for $l=1,\ldots, n_j$ and $j=1,\ldots, n.$ This representation is not necessarily unique as  \cite[Example 2.2]{Lia_LPA_realization} shows, but it is unique up to a graded isomorphism by Lemma \ref{lemma_on_shifts}. The algebra $R$ is as a {\em graded matricial representation} of $L_K(E).$ 

\subsection{The Grothendieck \texorpdfstring{$\Gamma$}{TEXT}-group of a graph}\label{subsection_graph_group}

If $E$ is a graph and $\Gamma$ is a group, the authors of \cite{Ara_et_al_Steinberg} construct a commutative monoid $M^\Gamma_E$ which is isomorphic to $\V^{\Gamma}(L_K(E))$ as a $\Gamma$-monoid. The authors of \cite{Roozbeh_Lia_comparability} provide an alternative construction of $M^\Gamma_E$ which we briefly review and recall one result of \cite{Roozbeh_Lia_comparability}. 

Let $F_E^\Gamma$ be a free commutative $\Gamma$-monoid generated by $v\in E^0$ and elements $q_Z$ for any finite and nonempty subset $Z$ of $\so^{-1}(v)$ if $v$ is an infinite emitter. Thus, any $a\in F_E^\Gamma-\{0\}$ can be written as $\sum_{i=1}^n \gamma_ig_i$ for some positive integer $n,$ $\gamma_i\in \Gamma,$ and generators $g_i$ of $F_E^\Gamma,$ possibly repeated, for  $i=1,\ldots,n.$  This representation, unique up to a permutation, is a {\em normal representation} of $a.$ The set $\{g_i |i=1,\ldots, n\}$ is the {\em support} of $a.$ 

If $w:E^1\to \Gamma$ is any weight function, the monoid $M_E^\Gamma$ is the quotient of $F_E^\Gamma$ under the congruence closure $\sim$ of the relation $\to_1$ defined by the three conditions below
for any  $\gamma\in \Gamma$ and $a\in F_E^\Gamma.$  
\begin{enumerate}
\item[(A1)] If $v$ is a regular vertex, then  
\[a+\gamma v\to_1 a+\sum_{e\in s^{-1}(v)}\gamma w(e)\ra(e).\]
\item[(A2)] If $v$ is an infinite emitter and $Z$ a finite and nonempty subset of $\so^{-1}(v),$ then  
\[a+\gamma v\to_1 a+\gamma q_Z+\sum_{e\in Z}\gamma w(e)\ra(e).\]
\item[(A3)] If $v$ is an infinite emitter and $Z\subsetneq W$ are finite and nonempty subsets of $\so^{-1}(v),$ then 
\[a+\gamma q_Z\to_1 a+\gamma q_W+\sum_{e\in W-Z}\gamma w(e)\ra(e).\]
\end{enumerate}
One often considers an intermediate step in the construction of $\sim$ from $\to_1$ and let $\to$ be the reflexive and transitive closure of $\to_1.$ 
Then, $\sim$ can be defined as the congruence closure of $\to$. The following lemma was first shown for trivial $\Gamma$ in \cite[Proposition 5.9]{Ara_Goodearl} in order to show that the monoid $M_E^{\{\varepsilon\}}$ has the refinement property. This proof was adapted to arbitrary $\Gamma$ in \cite[Lemma 5.9]{Ara_et_al_Steinberg}. 

\begin{lemma} \cite[Lemma 5.9]{Ara_et_al_Steinberg}.  
Let $E$ be a graph, $\Gamma$ a group, $w:E^1\to\Gamma$ a weight map and $a,b\in  F^\Gamma_E-\{0\}.$ The relation $a\sim b$ holds if and only if $a \to c$ and $b\to c$ for some $c \in F_E-\{0\}.$  
\label{confluence}
\end{lemma}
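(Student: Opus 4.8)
The ``if'' direction is immediate: since $\to$ is contained in $\sim$ (as $\sim$ is the congruence closure of $\to_1$, hence of $\to$), having $a\to c$ and $b\to c$ gives $a\sim c\sim b$. For the ``only if'' direction the plan is to show that the relation
\[R=\{(a,b)\in F_E^\Gamma\times F_E^\Gamma\mid a\to c\text{ and }b\to c\text{ for some }c\in F_E^\Gamma\}\]
is a congruence on the monoid $F_E^\Gamma$ containing $\to_1$; since $\sim$ is by definition the smallest such congruence, this yields $\sim\subseteq R$, which is the claim (modulo checking that the witness $c$ can be taken nonzero, addressed at the end). Reflexivity and symmetry of $R$ are clear, and $R\supseteq{\to_1}$ because $a\to_1 b$ gives $a\to b$ and $b\to b$. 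Compatibility with addition is routine: each single step $\to_1$ has the form ``$a'+(\text{redex})\to_1 a'+(\text{contractum})$'' with redex and contractum taken from (A1)--(A3), so adding a fixed $d\in F_E^\Gamma$ to every term of a reduction sequence $a\to c$ yields a reduction sequence $a+d\to c+d$; thus $(a,b)\in R$ implies $(a+d,b+d)\in R$ with the witness $c$ replaced by $c+d$.

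The substance of the argument is transitivity of $R$, which reduces to confluence of $\to$: if $a\to c_1$, $b\to c_1$, $b\to c_2$, $e\to c_2$, then a common $\to$-descendant $g$ of $c_1$ and $c_2$ gives $a\to c_1\to g$ and $e\to c_2\to g$, so $(a,e)\in R$. I would prove confluence by the standard two-step route. First, \emph{strong local confluence}: if $a\to_1 b_1$ and $a\to_1 b_2$ then there is $d$ with $b_1\to d$ and $b_2\to d$, each in at most one step. Here one distinguishes whether the two redexes are the same occurrence of a generator of $F_E^\Gamma$ in $a$ or disjoint occurrences. If they are disjoint --- in particular whenever they involve distinct generators, and also when the same generator occurs with multiplicity $\geq 2$ and the two moves act on different copies --- the two reductions commute, giving a common descendant in one further step each, since each rule depends only on the consumed generator and the chosen subset, both unchanged by the other move. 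If they are the same occurrence, the only possibilities are: two applications of (A1) at the same regular vertex, which coincide since (A1) is deterministic, so $b_1=b_2$; two applications of (A2) at the same infinite emitter $v$ with subsets $Z,Z'\subseteq\so^{-1}(v)$; or two applications of (A3) on the same generator $q_Z$ with $Z\subsetneq W_1$ and $Z\subsetneq W_2$. ((A2)/(A3) overlaps consume different generators, hence are disjoint.) In the (A2)/(A2) and (A3)/(A3) cases one sets $W$ to be the union of the two subsets involved and applies (A3) once to each of $b_1,b_2$ (using $q_Z\to q_W$, resp.\ $q_{W_i}\to q_W$, and noting that $(W\setminus Z)\cup Z=W$), reaching the common term in which the $q$-generator is $q_W$ and the ranges of all edges in $W$ have been added with weight. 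Second, strong local confluence propagates to confluence of $\to$ by the usual diagram-tiling (strip lemma) argument; this uses no termination hypothesis, which matters here because $\to$ need not terminate (e.g.\ along an infinite path).

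Finally, one checks that the witness $c$ is automatically nonzero when $a$ is: $F_E^\Gamma$ is a free commutative monoid, so a sum of elements of $F_E^\Gamma$ vanishes only if each summand does, while the contractum in each of (A1)--(A3) contains a nonzero summand (the expansion $\sum_{e\in\so^{-1}(v)}\gamma w(e)\ra(e)$ with $\so^{-1}(v)$ finite nonempty for a regular vertex, or the generator $\gamma q_Z$, resp.\ $\gamma q_W$). Hence $a\neq0$ and $a\to c$ force $c\neq0$, so for $a,b\in F_E^\Gamma-\{0\}$ with $a\sim b$ the inclusion $\sim\subseteq R$ produces a nonzero $c$ with $a\to c$ and $b\to c$, as required. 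The main obstacle is the bookkeeping in the strong local confluence step --- carefully enumerating and resolving the overlap cases among (A1), (A2), (A3) at an infinite emitter, including the multiplicity subtlety for ``disjoint'' occurrences of the same generator --- whereas the reduction of transitivity to confluence and the tiling argument are formal.
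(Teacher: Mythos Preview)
The paper does not give its own proof of this lemma; it is merely quoted from \cite[Lemma 5.9]{Ara_et_al_Steinberg} (with the trivially graded case going back to \cite[Proposition 5.9]{Ara_Goodearl}). Your argument is correct and is essentially the same approach taken in those references: one shows that $\to$ is confluent by checking a strong local-confluence (diamond) property for the rules (A1)--(A3), so that the relation $R$ you define is a congruence containing $\to_1$, whence $\sim\subseteq R$. Your case analysis of the overlaps is accurate, and your remark that the ``at most one step on each side'' form of local confluence yields global confluence without any termination hypothesis (via the reflexive closure $\to^{=}$ having the diamond property) is the right way to handle the possible non-termination of $\to$. The nonzero-witness check at the end is also correct.
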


If $[a]$ denotes the congruence class of $a\in F^\Gamma_E,$ $\leq$, defined below, is a  pre-order on $M_E^\Gamma.$ 
\[[a]\leq [b]\mbox{ if there is $c\in F^\Gamma_E$ such that }a+c\sim b\]

By \cite[Corollary 5.8]{Ara_et_al_Steinberg} (also by \cite[Proposition 3.1]{Roozbeh_Lia_comparability}), if $\Gamma$ is the infinite cyclic group on a generator $x,$ the monoid $M_E^\Gamma$ is cancellative. So, $\leq$ is an order and $M_E^\Gamma$ is the positive cone of its Grothendieck group $G^\Gamma_E.$ Also, if $E^0$ is finite and $1_E=\sum_{v\in E^0}[v],$ then $1_E$ is an order-unit of $M_E^\Gamma$ (and $G_E^\Gamma$).  

If  $a\in F_E^\Gamma$ and  $[a]=x^n[a]$ for some positive integer $n,$ the element $[a]$ is said to be {\em periodic}.
In \cite[Theorem 4.1]{Roozbeh_Lia_comparability}, the authors characterize this property of $[a]$ in terms of the properties of the generators in a normal representation of $a\in F_E^\Gamma.$ We recall this result below. 

\begin{theorem} \cite[Theorem 4.1]{Roozbeh_Lia_comparability}. If $\Gamma$ is the infinite cyclic group on a generator $x,$ the following conditions are equivalent for an element $a\in F^\Gamma_E-\{0\}.$
\begin{enumerate}[\upshape(1)]
\item The element $[a]\in M_E^\Gamma-\{0\}$ is periodic. 
\item Any path originating at a generator in the support of $a$ is a prefix of a path $p$ ending in one of finitely many cycles with no exits and such that all vertices of $p$ are regular. Every infinite path originating at a vertex in the support of $a$ ends in a cycle with no exits.
\end{enumerate}
\label{periodic}
\end{theorem}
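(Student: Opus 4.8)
The plan is to prove the two implications of the equivalence separately, after first recording a convenient reformulation of (2). Let $T$ be the subgraph of $E$ reachable from the support of $a$. I claim (2) is equivalent to the assertion that \emph{the support of $a$ consists of vertices, $T$ has finitely many vertices, every vertex of $T$ is regular, and no cycle of $T$ has an exit.} In the forward direction, the first sentence of (2) makes every reachable vertex regular, so $T$ is finitely branching, and then a K\"onig's lemma argument---using that, by the second sentence of (2), no infinite path from the support can stay off the cycles, so the off-cycle part of $T$ is well-founded---shows that $T$ is finite; that the reachable cycles have no exits follows by running an infinite path around such a cycle. The converse is routine: in a finite graph all of whose cycles have no exit, every infinite walk is eventually forced, by out-degree one on the cycles, to wind around a single cycle, which yields both sentences of (2). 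I would also note at the outset that one may assume the support of $a$ consists of vertices, since a generator $q_{Z}$ occurring in the support is incompatible with $[a]$ being periodic.

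For (2) $\Rightarrow$ (1), assume the reformulated condition and let $C_{1},\dots,C_{r}$ be the cycles of $T$, with lengths $m_{1},\dots,m_{r}$; put $m=\operatorname{lcm}(m_{1},\dots,m_{r})$. Since every vertex of $T$ is regular, relation (A1) applies throughout $T$, and since the vertices of $T$ lying off the cycles form a finite acyclic graph in which every path eventually reaches some $C_{j}$, repeatedly rewriting by (A1) expresses $[\gamma v]$, for each support vertex $v$, as a finite $\Zset^{+}[\Gamma]$-linear combination of classes $[\delta u]$ with $u$ a vertex of some $C_{j}$. Going once around $C_{j}$ via (A1) gives $[u]=x^{m_{j}}[u]$, and hence $[u]=x^{m}[u]$, for every such $u$; therefore $[\gamma v]=x^{m}[\gamma v]$ for each support vertex $v$, and summing over the normal representation of $a$ yields $[a]=x^{m}[a]$. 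Thus $[a]$ is periodic.

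For (1) $\Rightarrow$ (2) I would argue by contraposition, so suppose the reformulated version of (2) fails: then $T$ contains a sink, or an infinite emitter, or a cycle with an exit, or $T$ is infinite (which forces an infinite path from the support that either visits infinitely many distinct vertices or winds around a cycle having an exit). Since $M_{E}^{\Gamma}$ is cancellative it embeds into the $\Zset[x,x^{-1}]$-module $G_{E}^{\Gamma}$, so it suffices to show $(x^{n}-1)[a]\neq 0$ for every $n\geq 1$. The engine is Lemma \ref{confluence}: if $[a]=x^{n}[a]$ then $a$ and the shifted element $x^{kn}a$ share a $\to$-reduct for every $k\geq 1$, and since $\to$ only moves the ``mass'' of a generator forward along edges, comparing the two reductions shows that arbitrarily large forward shifts of the support must be absorbed back---whereas a sink, an infinite emitter, a cycle with an exit, or an escaping infinite path each lets some of that forward mass leak into a region from which it cannot be recovered after a shift by $kn$. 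I would make this rigorous by factoring $M_{E}^{\Gamma}$ through the graph monoid of a suitable small quotient graph---a lone sink, a lone infinite emitter, a loop with one exit edge, or a one-sided infinite ray $v_{0}\to v_{1}\to\cdots$---whose Grothendieck $\Gamma$-group is computed explicitly and on which the image of $[a]$ is visibly not periodic (for the ray, the only $\to$-reducts of $v_{0}$ are the $x^{k}v_{k}$ and those of $x^{n}v_{0}$ are the $x^{n+k}v_{k}$, which never coincide when $n\geq 1$). The step I expect to be the main obstacle is exactly this last one: identifying the correct quotient graphs---equivalently, the correct hereditary saturated subsets of $E^{0}$---through which $M_{E}^{\Gamma}$ factors so that periodicity is preserved while the leak is exposed, and, in the infinite-$T$ case, controlling the $\to$-reductions along an infinite path, which is cleanest via the compatibility of the monoid construction with direct limits of graphs.
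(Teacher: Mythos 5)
The statement you are proving is only quoted in this paper (it is \cite[Theorem 4.1]{Roozbeh_Lia_comparability}); no proof of it appears here, so there is nothing to compare against directly. On its merits, your reformulation of (2) and your argument for (2) $\Rightarrow$ (1) are essentially sound: K\"onig's lemma (after first noting that reachable cycles cannot have exits, so off-cycle vertices are reached by off-cycle paths) gives finiteness of the reachable subgraph $T$, and then expanding off-cycle vertices by (A1) and winding the no-exit cycles gives $[a]=x^m[a]$ with $m$ the least common multiple of the cycle lengths. One caveat: condition (2) as stated says nothing about generators $q_Z$ occurring in the support, so your claimed equivalence of (2) with ``the support consists of vertices, $T$ is finite, all its vertices are regular, no cycle has an exit'' cannot be established in the forward direction; your (correct) observation that a $q_Z$ in the support is incompatible with periodicity helps only for (1) $\Rightarrow$ (2), and this point should be handled as a caveat about the quoted formulation rather than folded silently into the reformulation.

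The genuine gap is the implication (1) $\Rightarrow$ (2), which you only outline and whose central step you yourself flag as unresolved. The proposed mechanism --- factoring $M_E^\Gamma$ through the graph monoid of a small quotient graph (a lone sink, a lone infinite emitter, a loop with one exit, a one-sided ray) --- cannot work in general: the only quotients of $M_E^\Gamma$ of this kind come from hereditary and saturated subsets of $E^0$, and these need not exist nontrivially. For a single vertex carrying two loops, for a cycle whose exit path returns to the cycle, or for a ray with chord edges $v_i\to v_{i+2}$, there is no proper nonzero hereditary saturated subset, hence no quotient graph that ``exposes the leak''; and passing to a hereditary subgraph gives a map \emph{into} $M_E^\Gamma$, not out of it, so it cannot detect non-periodicity of $[a]$. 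Consequently the obstruction cases must be handled by direct arguments in $F_E^\Gamma$ via Lemma \ref{confluence}: for sinks and infinite emitters one uses that a sink term or a $q$-term, once present, persists with its degree unchanged, while every term of a reduct of $x^{kn}a$ has degree at least $kn$ plus a constant depending on $a$ (this is exactly the style of the proof of Theorem \ref{strongly_graded} in this paper); for a cycle with an exit and for an infinite path escaping to infinitely many vertices one needs separate counting/growth or distance arguments (compare the citation of \cite[Lemma 3.8]{Roozbeh_Lia_comparability} in the proof of Theorem \ref{main}). Until such arguments are supplied, the hard implication remains unproved.
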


\section{Strongly graded, crossed product and group ring characterizations of graded matrix algebras over graded division rings}\label{section_cross_product}

In this section, we characterize when a graded matrix ring $R$ over a graded division ring $K$ is strongly graded, a crossed product, a skew group ring or a group ring in terms of the support $\Gamma_K$ of $K$ and the $\Gamma$-action on $K_0^\Gamma(R)$ for any $\Gamma$.  
Recall that if $R=\M_n(K)(\gamma_1, \dots,  \gamma_n),$ then $$(\V^\Gamma(R), [R])\cong (\Zset^+[\Gamma/\Gamma_K], \sum_{i=1}^n \gamma_i\Gamma_K)\;\;\;\;\mbox{ and }\;\;(K_0^\Gamma(R), [R])\cong (\Zset[\Gamma/\Gamma_K], \sum_{i=1}^n \gamma_i\Gamma_K).$$ 

\begin{theorem} Let $K$ be a $\Gamma$-graded division ring with support $\Gamma_K,$ $\gamma_1, \dots,  \gamma_n\in\Gamma,$ let $R$ denote the $\Gamma$-graded matrix ring $\M_n(K)(\gamma_1, \dots,  \gamma_n),$ and $u=\sum_{i=1}^n \gamma_i\Gamma_K$
denote an order-unit of $\Zset[\Gamma/\Gamma_K]$. 
\begin{enumerate}[\upshape(i)]
\item The following conditions are equivalent. 

\begin{enumerate}[\upshape(1)]
\item $R$ is strongly graded. 
\smallskip 
\item $\Gamma/\Gamma_K=\{\gamma_1\Gamma_K, \dots, \gamma_n\Gamma_K\}.$ 
\smallskip
\item The orbit of $\gamma_i\Gamma_K$ in $\Zset[\Gamma/\Gamma_K]$ is $\{\gamma_1\Gamma_K, \dots, \gamma_n\Gamma_K\}$ for every $i=1,\ldots, n.$
\item The order-unit $u$ is a strong order-unit of $\Zset[\Gamma/\Gamma_K]$.  
\end{enumerate} 
\medskip

\item Let $\gamma_{i_1},\ldots,\gamma_{i_m}$ be a complete list of coset representatives of $\gamma_1\Gamma_K, \dots, \gamma_n\Gamma_K$ and let $k_j$ denote the number of times $\gamma_{i_j}\Gamma_K$ appears in the list $\gamma_1\Gamma_K, \dots, \gamma_n\Gamma_K$ so that $u=\sum_{j=1}^m k_j\gamma_{i_j}\Gamma_K.$ The following conditions are equivalent. 

\begin{enumerate}[\upshape(1)]
\item $R$ is a crossed product.\smallskip
 
\item $\Gamma/\Gamma_K=\{\gamma_1\Gamma_K, \dots, \gamma_n\Gamma_K\}$ and $k_1=k_2=\ldots=k_m.$
\smallskip
\item $R$ is a skew group ring. 
\smallskip
\item $O(\gamma_i\Gamma_K)=\{\gamma_1\Gamma_K, \dots, \gamma_n\Gamma_K\}$ for every $i=1,\ldots, n\;$ and $\;\Stab(u)=\Gamma$ where the orbit and the stabilizer are taken in $\Zset[\Gamma/\Gamma_K]$.
\end{enumerate}
\medskip

\item The following conditions are equivalent. 

\begin{enumerate}[\upshape(1)]
\item $R$ is group ring.
\smallskip

\item $\Gamma=\Gamma_K.$
\smallskip

\item For every $a\in \Zset[\Gamma/\Gamma_K],$ $\Stab(a)=\Gamma.$

\item $\Gamma$ acts trivially on $\Zset[\Gamma/\Gamma_K]$. 
\end{enumerate}
\end{enumerate}
The above statements hold if $\Zset[\Gamma/\Gamma_K]$ is replaced by  $\Zset^+[\Gamma/\Gamma_K]$ in {\em (i3), (i4), (ii4), (iii3),} and {\em (iii4)}.
\label{three_characterizations} 
\end{theorem}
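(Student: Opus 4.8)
The plan is to organize the proof around the three parts (i), (ii), (iii), and within each part exhibit a cycle of implications among the four conditions, using as the main engine Theorem \ref{NvO_theorem} and the identifications $(\V^\Gamma(R),[R])\cong(\Zset^+[\Gamma/\Gamma_K],u)$ and $(K_0^\Gamma(R),[R])\cong(\Zset[\Gamma/\Gamma_K],u)$ recalled just before the statement. Since $R=\M_n(K)(\gamma_1,\dots,\gamma_n)\cong_{\gr}\END_R(F)$ for $F=(\gamma_1^{-1})K\oplus\cdots\oplus(\gamma_n^{-1})K$, the ``strongly graded'', ``crossed product'' and ``group ring'' conditions on $R$ translate, respectively, into: $F$ and $(\delta)F$ being graded weakly isomorphic for all $\delta$; $F\cong_{\gr}(\delta)F$ for all $\delta$; and the additional splitting/triviality conditions from Section \ref{subsection_strongly_graded_and_cross}. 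The dictionary $(\gamma)K\cong_{\gr}(\delta)K\iff\gamma^{-1}\Gamma_K=\delta^{-1}\Gamma_K$ then converts statements about graded isomorphism of sums of shifted copies of $K$ into statements about multisets of cosets, and hence into the orbit/stabilizer language of $\Zset[\Gamma/\Gamma_K]$.

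\emph{Part (i).} I would prove $(1)\Rightarrow(2)\Rightarrow(3)\Rightarrow(4)\Rightarrow(1)$. For $(1)\Rightarrow(2)$: strongly graded forces every component $R_\delta\neq0$, and the $\delta$-component of $R$ is nonzero iff some $\gamma_i^{-1}\delta\gamma_j\in\Gamma_K$, i.e.\ $\delta\gamma_j\Gamma_K=\gamma_i\Gamma_K$; letting $\delta$ range over all of $\Gamma$ shows every coset equals some $\gamma_i\Gamma_K$. For $(2)\Rightarrow(3)$: acting by $\gamma$ on $\gamma_i\Gamma_K$ in $\Zset[\Gamma/\Gamma_K]$ gives $\gamma\gamma_i\Gamma_K$, which by (2) is again one of the listed cosets, and conversely every listed coset $\gamma_j\Gamma_K=(\gamma_j\gamma_i^{-1})\gamma_i\Gamma_K$ is in the orbit. $(3)\Rightarrow(4)$: the orbit condition says $\Gamma$ permutes $\{\gamma_1\Gamma_K,\dots,\gamma_n\Gamma_K\}$ transitively, so for any $x=\sum c_g\,g\Gamma_K\in\Zset[\Gamma/\Gamma_K]$ we have $g\Gamma_K\le\gamma_i\Gamma_K$ up to a $\Gamma$-translate, and one estimates $x\le N u$ for suitable $N\in\Zset^+$; thus $u$ is a strong order-unit (and the same estimate lives in $\Zset^+[\Gamma/\Gamma_K]$). $(4)\Rightarrow(1)$ is exactly Proposition \ref{strong_order_unit_prop} applied through the isomorphism $K_0^\Gamma(R)\cong\Zset[\Gamma/\Gamma_K]$.

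\emph{Part (ii).} Here I would run $(1)\Rightarrow(2)\Rightarrow(3)\Rightarrow(1)$ together with $(2)\Leftrightarrow(4)$. For $(1)\Rightarrow(2)$: a crossed product is strongly graded, so (i) gives $\Gamma/\Gamma_K=\{\gamma_1\Gamma_K,\dots,\gamma_n\Gamma_K\}$; by Theorem \ref{NvO_theorem}(2), $R$ a crossed product means $F\cong_{\gr}(\delta)F$ for every $\delta$, i.e.\ the multiset $\{\gamma_1^{-1}\Gamma_K,\dots,\gamma_n^{-1}\Gamma_K\}$ is invariant (with multiplicities) under translation by every $\delta^{-1}$; applied to the grouped form $u=\sum_{j=1}^m k_j\gamma_{i_j}\Gamma_K$ and using that $\Gamma$ acts transitively on the cosets, this forces all $k_j$ equal. $(2)\Rightarrow(3)$: with all $k_j$ equal, write $F$ as $k$ copies of $\bigoplus_{j}(\gamma_{i_j}^{-1})K$; transitivity of the $\Gamma$-action on the distinct cosets lets one choose, for each $\delta$, a permutation matrix realizing $F\cong_{\gr}(\delta)F$, and these permutation isomorphisms can be chosen so the induced degree map $\AUT_R(F)\cap H\to\Gamma$ splits by a group homomorphism $\sigma$ built from monomial matrices — this gives the skew group ring structure. $(3)\Rightarrow(1)$ is immediate since a skew group ring is a crossed product. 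Finally $(2)\Leftrightarrow(4)$: $\Gamma/\Gamma_K=\{\gamma_i\Gamma_K\}$ together with $k_1=\cdots=k_m$ says precisely that $u$ is a ``balanced'' sum over a single $\Gamma$-orbit, which is equivalent to $O(\gamma_i\Gamma_K)$ being the full listed set for every $i$ \emph{and} every $\gamma\in\Gamma$ fixing $u$; the stabilizer computation $\Stab(\gamma\Gamma_K)=\gamma\Gamma_K\gamma^{-1}$ from Section \ref{subsection_ordered_groups} makes the bookkeeping with multiplicities routine.

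\emph{Part (iii).} I would show $(1)\Rightarrow(2)\Rightarrow(4)\Rightarrow(3)\Rightarrow(1)$. $(1)\Rightarrow(2)$: a group ring is in particular a crossed product, so by (ii) all cosets are listed and all $k_j$ are equal; the group ring condition (trivial $\sigma$) forces the conjugation action $\phi$ to be trivial, which via the structure $R\cong_{\gr}R_\varepsilon[\Gamma]$ and the graded simplicity of $R$ forces $\Gamma=\Gamma_K$ (equivalently $\Gamma/\Gamma_K$ is a singleton and $n=k_1$). $(2)\Rightarrow(4)$: if $\Gamma=\Gamma_K$ then $\Zset[\Gamma/\Gamma_K]\cong\Zset$ with the trivial $\Gamma$-action. $(4)\Rightarrow(3)$ is trivial, and $(3)\Rightarrow(2)$ follows by taking $a=\Gamma_K$ and using $\Stab(\Gamma_K)=\Gamma_K$, forcing $\Gamma=\Gamma_K$; then $(2)\Rightarrow(1)$ because $\Gamma=\Gamma_K$ means $R\cong_{\gr}\M_n(K)(\varepsilon,\dots,\varepsilon)$ up to the shifts in Lemma \ref{lemma_on_shifts}, and $\M_n(K)$ over a graded division ring with full support is a group ring over its identity component. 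The concluding sentence (that $\Zset[\Gamma/\Gamma_K]$ may be replaced by $\Zset^+[\Gamma/\Gamma_K]$ in the order-theoretic conditions) is handled by the observation that orbits, stabilizers, and the strong-order-unit property for $u\in\Zset^+[\Gamma/\Gamma_K]$ are detected entirely within the positive cone, since $\V^\Gamma(R)$ already sits inside $K_0^\Gamma(R)$ as its positive cone and the relevant inequalities $x\le Nu$ only involve positive elements.

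\textbf{Main obstacle.} The delicate point is the crossed-product/skew-group-ring equivalence in part (ii), specifically producing the \emph{splitting} $\sigma$ explicitly: one must exhibit, for each $\delta\in\Gamma$, an invertible homogeneous element of $R$ of degree $\delta$ in such a way that the assignment $\delta\mapsto\sigma(\delta)$ is multiplicative, not merely a section of sets. Concretely this amounts to choosing, compatibly across all $\delta$, monomial (permutation-times-diagonal) matrices over $K$ whose supports implement the $\Gamma$-action permuting the blocks $\{\gamma_{i_j}\Gamma_K\}$; the equal-multiplicity condition $k_1=\cdots=k_m$ is exactly what guarantees such a compatible choice exists, and verifying the cocycle-type compatibility (that the diagonal $K$-entries can be normalized to make $\sigma$ a homomorphism, using that $\Gamma_K$-homogeneous units of $K$ are available in every needed degree) is the computational heart of the argument. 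Everything else reduces to the coset/orbit dictionary, which is bookkeeping once the translation via Theorem \ref{NvO_theorem} is in place.
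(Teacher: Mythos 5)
Your overall strategy --- routing everything through Theorem \ref{NvO_theorem}, the module $F=(\gamma_1^{-1})K\oplus\cdots\oplus(\gamma_n^{-1})K$, and the dictionary between shifted copies of $K$ and cosets of $\Gamma_K$ --- is the same as the paper's, and the coset/orbit bookkeeping in (i2)$\Leftrightarrow$(i3)$\Leftrightarrow$(i4), (ii2)$\Leftrightarrow$(ii4) and most of part (iii) is fine. However, your argument for (i1)$\Rightarrow$(i2) fails. From strong grading you extract only that $R_\delta\neq 0$ for every $\delta$, i.e.\ that for each $\delta$ there exist \emph{some} $i,j$ with $\delta\gamma_j\Gamma_K=\gamma_i\Gamma_K$, and you then conclude that every coset equals some $\gamma_i\Gamma_K$. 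That inference is false: take $\Gamma=\Zset$, $K=k[x^3,x^{-3}]$ (so $\Gamma_K=3\Zset$) and $R=\M_2(K)(0,1)$. The degrees occurring in $R_\delta$ are $\delta-1,\delta,\delta+1$, one of which always lies in $3\Zset$, so every component $R_\delta$ is nonzero; yet the coset $2+3\Zset$ is not among $\{3\Zset,\,1+3\Zset\}$, and indeed $R$ is not strongly graded ($R_1$ is supported only on the $(2,1)$ entry, so $R_1R_1=0\neq R_2$). Non-vanishing of all $R_\delta$ only says the double cosets $\gamma_i\Gamma_K\gamma_j^{-1}$ cover $\Gamma$, which is strictly weaker than (2). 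To repair this you must fix one of the two indices: the paper applies Theorem \ref{NvO_theorem} to get $F\oplus P'\cong_{\gr}(\delta)F^k$ and evaluates the $\gamma_1$-component, forcing $F_{\delta\gamma_1}\neq 0$ and hence $\delta\gamma_1\Gamma_K=\gamma_i\Gamma_K$ with the second index pinned to $1$; equivalently, $e_{11}\in e_{11}R_\delta R_{\delta^{-1}}e_{11}$ shows the first column of $R_{\delta^{-1}}$ is nonzero. (Alternatively, since Proposition \ref{strong_order_unit_prop} already gives you (1)$\Leftrightarrow$(4), you could bypass the ring entirely and prove (4)$\Rightarrow$(2) inside $\Zset[\Gamma/\Gamma_K]$: the inequality $g\Gamma_K\leq Nu$ forces $g\Gamma_K$ to occur in $u$.)

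The second place needing attention is (ii2)$\Rightarrow$(ii3). You rightly single out the construction of a multiplicative splitting $\sigma$ as the heart of the matter, but your proposal leaves it as an assertion that the diagonal $K$-entries ``can be normalized.'' Two graded automorphisms of $F$ that induce the same permutation of the summands need not be equal, so arranging $\pi_\gamma\pi_\delta=\pi_{\gamma\delta}$ at the level of permutations does not yet give a homomorphism $\Gamma\to\Aut_K(F)\cap H$; one must also choose the homogeneous units of $K$ acting on each summand so that they compose correctly, and this is genuinely obstructed in general. For instance, the real quaternions graded by $\Zset/2\times\Zset/2$ form a graded division ring with full support (the case $n=1$, all $k_j$ equal), hence a crossed product, but the extension $1\to\mathbb R^\ast\to\{\pm 1,\pm i,\pm j,\pm k\}\cdot\mathbb R^\ast\to\Zset/2\times\Zset/2\to 1$ does not split, so no such normalization exists there. (The paper's printed proof of this step has the same issue, as it asserts $\phi_\gamma\phi_\delta=\phi_{\gamma\delta}$ from equality of the induced permutations; the problem is invisible in the Leavitt path algebra application because for $\Gamma=\Zset$ every surjection onto $\Gamma$ splits.) In your write-up you should either restrict $\Gamma$ or make the choice of $\phi_\gamma$ --- permutation \emph{and} diagonal part --- explicit and verify multiplicativity, rather than appeal to normalization.
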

\begin{proof}
(i) We show (1) $\Rightarrow$ (2) first. Let $F=\bigoplus_{i=1}^n (\gamma_i^{-1})K$ so that $R\cong_{\gr}\End_K(F).$ 
By Theorem \ref{NvO_theorem}, for any $\delta\in\Gamma,$ there is a finitely generated graded projective module $P',$ positive integer $k,$ and a  graded isomorphism $F\oplus P'\cong_{\gr} (\delta)F^k.$ This isomorphism maps $F_\lambda\oplus P'_\lambda$ onto $F_{\delta\lambda}^k$ for any $\lambda\in\Gamma.$ Thus, for $\lambda=\gamma_1,$ we have that $0\neq K_\varepsilon\leq F_{\gamma_1}\oplus P'_{\gamma_1}$ is mapped to $F_{\delta\gamma_1}^k$ which implies that $0\neq F_{\delta\gamma_1}=\bigoplus_{i=1}^n K_{\gamma_i^{-1}\delta\gamma_1}.$ So, for every $\delta,$ there is $i=1,\ldots, n$ such that the equivalent conditions below hold. 
\begin{center}
$K_{\gamma_i^{-1}\delta\gamma_1}\neq 0\;\;$ if and only  $\;\;\gamma_i^{-1}\delta\gamma_1\in\Gamma_K\;\;$ if and only if $\;\;\delta\gamma_1\Gamma_K=\gamma_i\Gamma_K.$ 
\end{center}
This shows that for any $\delta\in \Gamma,$ there is $i=1,\ldots, n$ such that $\delta\Gamma_K=\delta\gamma_1^{-1}\gamma_1\Gamma_K=\gamma_i\Gamma_K$ and so $\Gamma/\Gamma_K=\{\gamma_1\Gamma_K, \dots, \gamma_n\Gamma_K\}.$ 

Let us show (2) $\Rightarrow$ (1) now. If $\Gamma/\Gamma_K=\{\gamma_1\Gamma_K, \dots, \gamma_n\Gamma_K\},$ then $\{\delta^{-1}\gamma_1\Gamma_K, \ldots, \delta^{-1}\gamma_n\Gamma_K\}$ is also equal to $\Gamma/\Gamma_K$ for any $\delta\in\Gamma.$ Let $\gamma_{i_1},\ldots,\gamma_{i_m}$  be a complete list of coset representatives of $\gamma_1\Gamma_K, \dots, \gamma_n\Gamma_K$ and let $k_j$ denote the number of times $\gamma_{i_j}\Gamma_K$ appears in the list  $\gamma_1\Gamma_K, \dots, \gamma_n\Gamma_K.$ The terms of $F=\bigoplus_{i=1}^n (\gamma_i^{-1})K$ can be permuted so that $F\cong_{\gr}\bigoplus_{j=1}^m (\gamma_{i_j}^{-1})K^{k_j}.$

Let $l_j$ be the multiplicity of $\gamma_{i_j}\Gamma_K$ in the list $\delta^{-1}\gamma_1\Gamma_K, \ldots, \delta^{-1}\gamma_n\Gamma_K$ so that we have that \[(\delta)F=\bigoplus_{i=1}^n (\gamma_i^{-1}\delta)K\cong_{\gr}\bigoplus_{j=1}^m (\gamma_{i_j}^{-1}\delta)K^{k_j}\cong_{\gr}\bigoplus_{j=1}^m (\gamma_{i_j}^{-1})K^{l_j}.\]
Let $k$ be an integer such that $kl_j-k_j\geq 0$ for each $j=1,\ldots, m$ and let $P'=\bigoplus_{j=1}^m (\gamma_{i_j}^{-1})K^{kl_j-k_j}.$ Then, \[F\oplus P'\cong_{\gr}\bigoplus_{j=1}^m (\gamma_{i_j}^{-1})K^{k_j}\oplus\bigoplus_{j=1}^m (\gamma_{i_j}^{-1})K^{kl_j-k_j}\cong_{\gr}  \bigoplus_{j=1}^m (\gamma_{i_j}^{-1})K^{kl_j}\cong_{\gr} (\delta)F^k.\]
Similarly, one can show that $(\delta)F\oplus Q'\cong_{\gr}F^l$ for some positive integer $l$ and  finitely generated graded projective module $Q'.$ Thus, $R$ is strongly graded by Theorem \ref{NvO_theorem}. 

The equivalence (2) $\Leftrightarrow$ (3) is direct by the formula $O(\gamma\Gamma_K)=\Gamma/\Gamma_K$ for any $\gamma\in\Gamma$. 

The equivalence (1) $\Leftrightarrow$ (4) follows from Proposition \ref{strong_order_unit_prop}. 

(ii) Assuming that (1) holds, consider $\bigoplus_{j=1}^m (\gamma_{i_j}^{-1})K^{k_j}$ as in the proof of (i2) $\Rightarrow$ (i1). By Theorem \ref{NvO_theorem}, $R$ is a crossed product if and only if 
\[ \bigoplus_{j=1}^m (\gamma_{i_j}^{-1})K^{k_j}\cong_{\gr}\bigoplus_{j=1}^m (\gamma_{i_j}^{-1}\delta)K^{k_j}\]
for any $\delta\in\Gamma.$ Consider the case when $\delta$ is $\gamma_{i_j}$ for any $j,$ and then consider the $\gamma_{i_l}$-component of the two modules in the formula above for $l=1,\ldots, m$ such that $\gamma_{i_l}\Gamma_K=\Gamma_K.$ 
Since 
\[ \bigoplus_{j'=1}^m (\gamma_{i_{j'}}^{-1})K^{k_{j'}}_{\gamma_{i_l}}=\bigoplus_{j'=1}^m K^{k_{j'}}_{\gamma_{i_{j'}}^{-1}\gamma_{i_l}}=K^{k_l}_{\gamma_{i_l}^{-1}\gamma_{i_l}}=K_\varepsilon^{k_l} \;\mbox{ and }\;\bigoplus_{j'=1}^m(\gamma_{i_{j'}}^{-1}\gamma_{i_j})K^{k_{j'}}_{\gamma_{i_l}}=\bigoplus_{j'=1}^m K^{k_{j'}}_{\gamma_{i_{j'}}^{-1}\gamma_{i_j}\gamma_{i_l}}=K^{k_j}_{\gamma_{i_l}}\cong K^{k_j}_\varepsilon,\]
we have that $K_\varepsilon^{k_l}\cong K^{k_j}_\varepsilon.$ Thus, $k_j=k_l.$ Since $j$ was arbitrary, the condition (2) holds. 

Assuming that (2) holds, $F=\bigoplus_{i=1}^n (\gamma_i^{-1})K$ and $(\gamma)F$ differ only in the order of the terms, so they are graded isomorphic for every $\gamma\in\Gamma$. Hence, $R$ is a crossed product and the sequence 
\[\xymatrix{\{1_F\}\ar[r]& \Aut_K(F)_\varepsilon \ar[r]& \Aut_K(F)\cap H\ar[r]^{\hskip.9cm\psi} &\Gamma\ar[r]& \{\varepsilon\}}\]
is exact where $H$ is the set of homogeneous elements of $\Aut_K(F)$ and $\psi$ is the degree map. To show (3), we define a splitting $\phi:\Gamma\to \Aut_K(F)\cap H.$  

For $\gamma\in\Gamma,$ let $\phi_\gamma\in\Aut_K(F)_\gamma$ be an isomorphism $F\cong_{\gr}(\gamma)F$ and let $\pi$ be the permutation of $\{1,2,\ldots, n\}$ which corresponds to the reordering of the terms done by $\phi_\gamma$ so that  
\begin{center} 
$\phi_\gamma((\gamma_i^{-1})K)=(\gamma_{\pi(i)}^{-1})K\;\;$ and $\;\;\gamma_i\Gamma_K=\gamma^{-1}\gamma_{\pi(i)}\Gamma_K.$
\end{center}

If $\delta\in \Gamma$ is such that $\phi_\delta((\gamma_i^{-1})K)=(\gamma_{\sigma(i)}^{-1})K$ for a permutation $\sigma,$ then 
$\gamma_i\Gamma_K=\delta^{-1}\gamma_{\sigma(i)}\Gamma_K=\delta^{-1}\gamma^{-1}\gamma_{\pi\sigma(i)}\Gamma_K=(\gamma\delta)^{-1}\gamma_{\pi\sigma(i)}\Gamma_K.$ So, the composition $\pi\sigma$ corresponds to the permutation of the terms of $F$ by the graded isomorphism $\phi_{\gamma\delta}.$ Thus, 
$\phi_\gamma\phi_\delta((\gamma_i^{-1})K)=\phi_\gamma((\gamma_{\sigma(i)}^{-1})K)=(\gamma_{\pi\sigma(i)}^{-1})K$ which shows that $\phi_\gamma\phi_\delta=\phi_{\gamma\delta}$ and that the mapping $\gamma\mapsto \phi_\gamma$ defines a group homomorphism $\phi:\Gamma\to\Aut_K(F)\cap H.$
Since $\psi\phi$ is the identity, the short exact sequence above splits. Hence, $R$ is a skew group ring and (3) holds. 

The implication (3) $\Rightarrow$ (1) is direct and we show that (2) $\Leftrightarrow$ (4) next. If (2) holds, then $\gamma u=\gamma \sum_{j=1}^m k_j\gamma_{i_j}\Gamma_K=\sum_{j=1}^m k_j\gamma_{i_j}\Gamma_K=u$ and so $\Stab(u)=\Gamma$ and (4) holds. To show the converse, assume that (4) holds and let $j,j'\in\{1,\ldots, m\}$ be arbitrary. Let $l=1,\ldots, m$ be such that $\gamma_{i_l}\gamma_{i_j}\Gamma_K=\gamma_{i_{j'}}\Gamma_K.$ Then $\gamma_{i_l}u=u$ implies that there are $k_j$ terms with $\gamma_{i_{j'}}\Gamma_K$ on the left hand side and $k_{j'}$ terms with $\gamma_{i_{j'}}\Gamma_K$ on the right hand side of $\gamma_{i_l}u=u.$ Hence, $k_j=k_{j'}$ and so (2) holds.   

(iii) If $R$ is a group ring, $\phi(\gamma)=\phi_\gamma$ is the identity on $F$ for any $\gamma\in\Gamma$ where $\phi$ and $\phi_\gamma$ are the maps from the proof of (ii2) $\Rightarrow$ (ii3). By the definition of $\phi,$ $\gamma_i\Gamma_K=\gamma^{-1}\gamma_{i}\Gamma_K$ for any $i=1,\ldots, n$ and any $\gamma\in\Gamma.$ Taking $\gamma$ to be $\gamma_j^{-1}$ for arbitrary $j=1,\ldots, n$ and $i$ to be such that $\gamma_i\Gamma_K=\Gamma_K,$ we have that $\Gamma_K=\gamma_j\Gamma_K.$ Since $j$ was arbitrary, $\Gamma=\Gamma_K.$ This shows (1) $\Rightarrow$ (2). 
Conversely, if $\Gamma=\Gamma_K,$ then $\gamma_i\Gamma_K=\Gamma_K=\gamma^{-1}\gamma_{i}\Gamma_K$ for any $i=1,\ldots, n$ and any $\gamma\in
\Gamma$ so $\phi_\gamma$ is the identity $1_F.$ Hence, $\{1_F\}$ is the image of $\phi$ and so (1) holds. 
The conditions (2), (3), and (4) are clearly equivalent.   
\end{proof}

By Theorem \ref{three_characterizations}, if $K$ is a trivially $\Gamma$-graded field and $\Gamma$ is infinite, then $R=\M_n(K)(\gamma_1, \dots,  \gamma_n)$ is not strongly graded for any $\gamma_1, \dots,  \gamma_n\in\Gamma.$ 

We consider another special case of Theorem \ref{three_characterizations} which is relevant for Leavitt path algebras. 
To shorten the notation, if each $\gamma_i\in \Gamma, i=1,\ldots,m,$ appears $k_i$ times in the list $$\gamma_1,\gamma_1,\ldots, \gamma_1,\; \gamma_2,\gamma_2\ldots, \gamma_2,\; \ldots\ldots\ldots, \;\gamma_m,\gamma_m,\ldots, \gamma_m,$$ we abbreviate this list as $$k_1(\gamma_1), k_2(\gamma_2),\ldots, k_m(\gamma_m).$$ So, if $K$ is a graded division ring, we use the following abbreviation  
\[\M_n(K)(\gamma_1,\gamma_1,\ldots, \gamma_1,\, \gamma_2,\gamma_2\ldots, \gamma_2,\, \ldots\ldots\ldots,\, \gamma_m,\gamma_m,\ldots, \gamma_m)=
\M_n(K)(k_1(\gamma_1), k_2(\gamma_2),\ldots, k_m(\gamma_m))\] For example, if $\Gamma$ is $\Zset,$ we shorten $\M_9(K)(0,0,0,0,1,1,1,2,2)$ as $\M_9(K)(4(0),3(1),2(2)).$

If $K$ is any field, we consider the algebra $K[x^m, x^{-m}]$ graded by $\Zset$ as in Section \ref{subsection_LPAs}. 

\begin{corollary}
Let $m$ and $n$ be positive integers and $\gamma_1, \ldots, \gamma_n$ be integers such that, when considered modulo $m$ and when arranged in a non-decreasing list, the list becomes $k_0(0), \ldots, k_{m-1}(m-1)$ for nonnegative integers  $k_0,\ldots, k_{m-1}.$ If $R=\M_n(K[x^m, x^{-m}])(\gamma_1, \ldots, \gamma_n),$ then the following hold. 
\begin{enumerate}[\upshape(1)]
\item $R$ is strongly graded if and only if $k_0, k_1,\ldots, k_{m-1}$ are positive. 

\item $R$ is a crossed product if and only if $k_0, k_1,\ldots, k_{m-1}$ are positive and equal to each other. 
In this case, $R$ is a skew group ring. 

\item $R$ is a group ring if and only if $m=1.$ In this case, $R\cong_{\gr}\M_n(K[x, x^{-1}])(0, 0, \ldots, 0).$
\end{enumerate}
\label{matrix_alg_corollary} 
\end{corollary}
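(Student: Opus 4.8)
The plan is to read the corollary off Theorem \ref{three_characterizations} by specializing to $\Gamma=\Zset$ (written additively) and taking the $\Gamma$-graded division ring there to be $K[x^m,x^{-m}]$, whose support is $\Gamma_K=m\Zset$. The first step is to set up the dictionary between cosets and the numbers $k_0,\ldots,k_{m-1}$: the quotient $\Zset/m\Zset$ has exactly $m$ elements, and by the hypothesis on the $\gamma_i$ the multiset $\gamma_1+m\Zset,\ldots,\gamma_n+m\Zset$ is the one in which the class of $j$ occurs exactly $k_j$ times for $j=0,\ldots,m-1$. In particular $\{\gamma_1+m\Zset,\ldots,\gamma_n+m\Zset\}=\Zset/m\Zset$ if and only if $k_j\geq 1$ for every $j$.

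For (1), I would apply the equivalence (i1)~$\Leftrightarrow$~(i2): $R$ is strongly graded exactly when $\Zset/m\Zset=\{\gamma_1+m\Zset,\ldots,\gamma_n+m\Zset\}$, which by the dictionary means $k_0,\ldots,k_{m-1}$ are all positive. For (2), I would use (ii1)~$\Leftrightarrow$~(ii2) together with (ii1)~$\Leftrightarrow$~(ii3): $R$ is a crossed product precisely when the cosets cover $\Zset/m\Zset$ and the multiplicities of the distinct cosets are all equal; when the covering holds the distinct cosets are $0,1,\ldots,m-1$ with multiplicities $k_0,\ldots,k_{m-1}$, so the condition becomes $k_0=\cdots=k_{m-1}$ and all positive, and Theorem \ref{three_characterizations}(ii) furthermore gives that in this situation $R$ is a skew group ring. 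For (3), I would use (iii1)~$\Leftrightarrow$~(iii2): $R$ is a group ring iff $\Gamma=\Gamma_K$, i.e.\ $\Zset=m\Zset$, i.e.\ $m=1$. When $m=1$ the graded division ring is $K[x,x^{-1}]$ with $x^k$ an invertible element of degree $k$ for every $k$, so Lemma \ref{lemma_on_shifts}(3) lets me shift the first suspension parameter to $0$; alternating this with the permutation of Lemma \ref{lemma_on_shifts}(1) I move $\gamma_1,\ldots,\gamma_n$ to $0$ one at a time, obtaining $R\cong_{\gr}\M_n(K[x,x^{-1}])(0,0,\ldots,0)$.

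I do not expect a real obstacle here; the only thing to watch is the clash of notation in which $m$ denotes the modulus in the corollary but the number of \emph{distinct} cosets in Theorem \ref{three_characterizations}(ii). These two counts agree exactly under the covering hypothesis (all $m$ residues attained), which is the only regime in which the crossed-product condition can hold, so the two indexings line up and no ambiguity results. Everything of substance is already contained in Theorem \ref{three_characterizations}; the corollary amounts to this translation plus the short shifting computation for part (3).
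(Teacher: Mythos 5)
Your proposal is correct and follows essentially the same route as the paper: both reduce everything to Theorem \ref{three_characterizations} via the observation that $\Gamma_{K[x^m,x^{-m}]}=m\Zset$, so that the multiset of cosets $\gamma_i+m\Zset$ is encoded by the multiplicities $k_0,\ldots,k_{m-1}$, and both finish part (3) with Lemma \ref{lemma_on_shifts}. The only cosmetic difference is that the paper first normalizes $R$ to $\M_n(K[x^m,x^{-m}])(k_0(0),\ldots,k_{m-1}(m-1))$ by Lemma \ref{lemma_on_shifts} before quoting the theorem, whereas you apply the theorem to $R$ directly (which is equally valid, since its hypotheses are phrased in terms of the cosets), and you spell out the shift-and-permute iteration for the last sentence of (3) that the paper leaves implicit.
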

\begin{proof}
Note that  $\Gamma/\Gamma_{K[x^m, x^{-m}]}=\Zset/m\Zset.$ If $R'=\M_n(K[x^m, x^{-m}])(k_0(0), k_1(1), \ldots, k_{m-1}(m-1)),$ then $R\cong_{\gr}R'$ by Lemma \ref{lemma_on_shifts}. The statements follow directly by applying Theorem \ref{three_characterizations} to $R'.$ The last sentence of part (3) follows by part (3) of Lemma \ref{lemma_on_shifts}. 
\end{proof}

\section{Crossed product and group ring Leavitt path algebras}\label{section_cross_product_LPAs}

In this section, we prove the main result, Theorem \ref{main}. Let (EDL) be the graph property below.  
\begin{itemize}
\item[(EDL)] For every cycle of length $m,$ the lengths, considered modulo $m$, of all paths which do not contain the cycle and which end in an arbitrary but fixed vertex of the cycle, are $k(0),\ldots, k(m-1)$ for some positive integer $k.$ 
\end{itemize}
The notation EDL shortens ``equally distributed lenghts''. Example \ref{example_cross_product} contains finite graphs which have (EDL) and a graph which fails to have (EDL).

\begin{theorem} Let $E$ be a graph and $K$ a field. In conditions (i5) and (ii4) and in the proof, $\Gamma$ is the infinite cyclic group generated by $x.$
\begin{enumerate}[\upshape(i)]
\item The following conditions are equivalent. 
\begin{enumerate}[\upshape(1)]
\item The algebra $L_K(E)$ is a crossed product.

\item The graph $E$ is a finite no-exit graph without sinks such that Condition (EDL) holds.  

\item $L_K(E)$ is graded isomorphic to an algebra of the form 
\[R=\bigoplus_{i=1}^n\M_{k_im_i}(K[x^{m_i}, x^{-m_i}])(k_i(0), k_i(1), \ldots, k_i(m_i-1))\]
where $n,$ $k_i$ and $m_i$ are positive integers for $i=1,\ldots, n.$   

\item The algebra $L_K(E)$ is a skew group ring. 

\item The set $E^0$ is finite and 
\[(G_E^\Gamma, 1_E)\cong \bigoplus_{i=1}^n \left(\Zset[x_i]/(x_i^{m_i}=1), \sum_{j=0}^{m_i-1}k_i x^j\right)\]
for some positive integers $n,$ $k_i$ and $m_i$ for $i=1,\ldots, n.$ 
\end{enumerate}

\item 
The following conditions are equivalent. 
\begin{enumerate}[\upshape(1)]
\item The algebra $L_K(E)$ is a group ring.

\item The graph $E$ is  a finite no-exit graph without sinks such that every cycle is of length one. 

\item $L_K(E)$ is graded isomorphic to an algebra of the form 
\[\bigoplus_{i=1}^n\M_{k_i}(K[x, x^{-1}])(k_i(0))\]
where $n$ and $k_i$ are positive integers for $i=1,\ldots, n.$   

\item  The set $E^0$ is finite  and $(G_E^\Gamma, 1_E)\cong \bigoplus_{i=1}^n (\Zset, k_i)$ for some positive integers $n$ and $k_i$ for $i=1,\ldots, n.$
\end{enumerate}
\end{enumerate}
\label{main} 
\end{theorem}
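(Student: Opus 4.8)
The plan is to prove Theorem \ref{main} by reducing it, via the graded matricial representation of Leavitt path algebras of finite no-exit graphs, to the already-established Corollary \ref{matrix_alg_corollary}, and then to translate the resulting matrix-algebra conditions into graph-theoretic conditions (EDL) and into Grothendieck $\Gamma$-group conditions. The first observation is that being a crossed product forces $L_K(E)$ to be unital (a crossed product has an invertible, hence nonzero, $\varepsilon$-component containing $1$), so $E^0$ must be finite; and since a crossed product is strongly graded, by the known characterization (\cite[Theorem 3.15]{Roozbeh_Israeli}, \cite[Proposition 45]{Nystedt_Oinert}) $E$ has no sinks. One then argues that $E$ must in fact be a finite no-exit graph: if $E$ had a cycle with an exit, or an infinite path not ending in a cycle, the periodicity criterion of Theorem \ref{periodic} would fail for $1_E$, and via Theorem \ref{NvO_theorem} (crossed product $\iff$ $R\cong_{\gr}(\gamma)R$ for all $\gamma$, i.e. $[R]$ is a fixed point of the $\Gamma$-action on $\V^\Gamma$) this would contradict $L_K(E)$ being a crossed product. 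This is where the bulk of the work lies.

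Once $E$ is known to be a finite no-exit graph without sinks, I would invoke the graded matricial representation from section \ref{subsection_LPAs}: $L_K(E)\cong_{\gr}\bigoplus_{i=1}^k \M_{k_i}(K)(\ldots)\oplus\bigoplus_{j=1}^n\M_{n_j}(K[x^{m_j},x^{-m_j}])(\ldots)$. The "no sinks" hypothesis kills the first (sink) summand entirely, so $L_K(E)\cong_{\gr}\bigoplus_{j=1}^n\M_{n_j}(K[x^{m_j},x^{-m_j}])(\delta_{j1},\ldots,\delta_{jn_j})$. A direct sum of graded rings is a crossed product (resp. skew group ring, group ring) if and only if each summand is, since an invertible homogeneous element of the direct sum must have invertible homogeneous components in each block and conversely; this is an easy lemma I would state and prove in one line. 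Applying Corollary \ref{matrix_alg_corollary} to each block then gives: $L_K(E)$ is a crossed product $\iff$ for each $j$ the shifts $\delta_{j1},\ldots,\delta_{jn_j}$ modulo $m_j$ are equally distributed as $k_j(0),\ldots,k_j(m_j-1)$ with all $k_j$ positive. Unravelling the meaning of $\delta_{jl}$ — the lengths of paths not containing the $j$-th cycle and ending at a fixed vertex of that cycle — this is precisely Condition (EDL). This establishes (i1)$\iff$(i2), and simultaneously (i1)$\iff$(i3), since after applying Lemma \ref{lemma_on_shifts} each block becomes $\M_{k_jm_j}(K[x^{m_j},x^{-m_j}])(k_j(0),\ldots,k_j(m_j-1))$; the equivalence (i1)$\iff$(i4) (skew group ring) is the "in this case" clause of Corollary \ref{matrix_alg_corollary}(2).

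For the remaining equivalence (i5), I would compute the Grothendieck $\Gamma$-group. Using $(K_0^\Gamma(\M_n(K')(\ldots)),[R])\cong(\Zset[\Gamma/\Gamma_{K'}],\sum\gamma_i\Gamma_{K'})$ from section \ref{subsection_Grothendieck_graded} with $K'=K[x^{m_j},x^{-m_j}]$ and $\Gamma_{K'}=m_j\Zset$, one gets $\Zset[\Zset/m_j\Zset]\cong\Zset[x_j]/(x_j^{m_j}-1)$ with order-unit $\sum_{l=1}^{n_j}\delta_{jl}\Gamma_{K'}$; when (EDL) holds this order-unit is $k_j\sum_{t=0}^{m_j-1}x^t$. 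Taking direct sums over $j$ and identifying $G_E^\Gamma$ (the Grothendieck group of $M_E^\Gamma=\V^\Gamma(L_K(E))$) with $\bigoplus_j\Zset[x_j]/(x_j^{m_j}-1)$ yields the displayed isomorphism of pairs; conversely, from such a $(G_E^\Gamma,1_E)$ one reads off that $E^0$ is finite (since $1_E$ exists as a finite sum) and, running the structure theory in reverse through Proposition \ref{strong_order_unit_prop} and Theorem \ref{periodic}, that $E$ must be finite no-exit without sinks with (EDL). Part (ii) is the special case $m_i=1$ for all $i$: a cycle of length one, the block becomes $\M_{k_i}(K[x,x^{-1}])(k_i(0))$, which by Corollary \ref{matrix_alg_corollary}(3) is a group ring, and $G_E^\Gamma\cong\bigoplus(\Zset/1\cdot\Zset\text{-module})=\bigoplus\Zset$ with order-unit $(k_1,\ldots,k_n)$; I would simply note that each step of the part (i) argument specializes. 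The one point requiring care is matching the orbit/stabilizer bookkeeping of Theorem \ref{three_characterizations} with the combinatorics of path lengths in the representation theorem, but since Corollary \ref{matrix_alg_corollary} already packages exactly this, the translation is mechanical.
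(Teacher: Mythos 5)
Your proposal is correct and follows essentially the same route as the paper: periodicity of $1_E$ (hence of each $[v]$) via Theorem \ref{NvO_theorem} and Theorem \ref{periodic} to force $E$ finite, no-exit and without sinks, then the graded matricial representation combined with Corollary \ref{matrix_alg_corollary} (equivalently Theorem \ref{three_characterizations}) to obtain (EDL), the matrix form, the skew group ring property, and the $K_0^\Gamma$ description, with part (ii) as the case $m_i=1$. The only differences are cosmetic: you make the block-by-block reduction lemma explicit where the paper leaves it implicit, and your appeal to the strongly-graded/no-sinks characterization is redundant (and stated for finite graphs when only $E^0$ is yet known to be finite) since the periodicity argument already yields that every vertex is regular.
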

\begin{proof}
(i) We show (1) $\Rightarrow$ (2) first. If (1) holds, then $L_K(E)$ is unital and so $E^0$ is a finite set. By Theorem \ref{NvO_theorem}, $L_K(E)\cong_{\gr}(1)L_K(E)$ and so the relation $[L_K(E)]=x[L_K(E)]$ holds in $\V^\Gamma(L_K(E)).$ Considering the isomorphism $(\V^\Gamma(L_K(E)), [L_K(E)])\cong (M_E^\Gamma, 1_E),$ we obtain that $1_E=\sum_{v\in E^0}[v]\in M_E^\Gamma$ is such that $1_E=x1_E$ and so the element $\sum_{v\in E^0}[v]$ is periodic. Since a summand of a periodic element is periodic by \cite[Theorem 4.1]{Roozbeh_Lia_comparability}, $[v]$ is periodic for every vertex $v$ of $E.$ 

By Theorem \ref{periodic}, any path originating at any vertex of $E$ is a prefix of a path $p$ ending in one of finitely many cycles with no exits and such that all vertices of $p$ are regular. Since there are finitely many vertices, there are finitely many such cycles with no exits. The vertices of the paths leading to the cycles are regular and vertices on the cycles emit exactly one edge, so $E$ is a row-finite graph. A row-finite graph with finitely many vertices is finite, so $E$ is finite. 

If there is a cycle with an exit in $E,$ then $[v]$ is not periodic for every vertex $v$ of that cycle (see \cite[Lemma 3.8]{Roozbeh_Lia_comparability}). As this cannot happen, $E$ is a no-exit graph. By Theorem \ref{three_characterizations} and Lemma \ref{lemma_on_shifts}, if $R$ is a matricial representation of $L_K(E),$ then $R$ necessarily has a form as in part (3) where $n$ is the number of cycles, $m_i$ their lengths and the lengths, considered modulo $m_i$, of all paths which do not contain the cycle and which end in an arbitrary vertex of the cycle, are $k_i(0),\ldots, k_i(m_i-1)$ for some positive $k_i$  for all $i=1, \ldots, n.$ Hence, (2) holds. 

If $E$ is a graph as in (2), $n$ is the number of cycles in $E$, $m_i$ their lengths and $k_i$ the number as in Condition (EDL) for the $i$-th cycle, then a graded matricial representation of $L_K(E)$ has the form as in condition (3). So, (2) $\Rightarrow$ (3) holds. 

The algebra $R$ in (3) is a skew group ring by Corollary \ref{matrix_alg_corollary} so (3) $\Rightarrow$ (4). The implication (4) $\Rightarrow$ (1) is direct, so the conditions (1) to (4) are equivalent. Since the implication (3) $\Rightarrow$ (5) is also direct, to complete the proof of (i), we show that (5) $\Rightarrow$ (1).

If (5) holds, then $x\sum_{j=0}^{m_i-1}k_i x^j= \sum_{j=0}^{m_i-1}k_i x^j$ and so $x1_E=1_E$ holds in $G_E^\Gamma.$ Consequently, $[L_K(E)]=x[L_K(E)]$
holds in $K_0^\Gamma(L_K(E))$ and, hence, in $\V^\Gamma(L_K(E))$ also. Thus, $L_K(E)\cong_{\gr} (1)L_K(E)$ which implies that $L_K(E)\cong_{\gr} (n)L_K(E)$ for any integer $n.$ Hence, $L_K(E)$ is a crossed product by Theorem \ref{NvO_theorem}.  

(ii) To show (ii), we show (1) $\Rightarrow$ (2) $\Rightarrow$ (3) $\Rightarrow$ (1) and (3) $\Leftrightarrow$ (4). 

If (1) holds, then (i1) holds so (i2) and (i3) hold. By Theorem \ref{three_characterizations}, (1) and (i3) imply that $m_i=1$ for all $i=1,\ldots, n$ for $R$ as in (i3) and so all cycles of $E$ have length one. Hence, (2) holds.   

If $E$ is a graph as in (2), $n$ is the number of cycles in $E,$ and $k_i$ the number of paths to the vertex of the $i$-th cycle, then a graded matricial representation of $L_K(E)$ has the form as in condition (3). So, (2) $\Rightarrow$ (3) holds. The algebra in condition (3) is a group ring by Corollary \ref{matrix_alg_corollary} so (3) $\Rightarrow$ (1).

The implication  (3) $\Rightarrow$ (4) is direct. To show the converse, assume that (4) holds. Then $xa=a$ holds for every element $a\in G_E^\Gamma.$ Since (4) implies (i5) with $m_i=1$ for all  $i=1,\ldots, n,$ $L_K(E)\cong_{\gr} R$ for some graded algebra $R$ as in condition (i3) with $m_i=1$ for all $i=1,\ldots, k.$ Hence, (3) holds.   
\end{proof}

Theorem \ref{main} enables one to directly check whether a finite graph is a crossed product. 

\begin{example}
Consider the finite no-exit graphs without sinks below. 
$$
\xymatrix{ {\bullet} \ar[r]& {\bullet} \ar@/^1pc/ [r]   & {\bullet} \ar@/^1pc/ [l] }\hskip2cm
\xymatrix{ {\bullet} \ar[r]&{\bullet} \ar[r]& {\bullet} \ar@/^1pc/ [r]   & {\bullet} \ar@/^1pc/ [l]}  \hskip2cm 
\xymatrix{ {\bullet} \ar[r]& {\bullet} \ar@/^1pc/ [r]   & {\bullet} \ar@/^1pc/ [l] &\ar[l] {\bullet}}$$

For the first graph,  0, 1, and 1 are the lengths (modulo 2) of paths which end at any vertex of the cycle and which do not contain the cycle. Since the numbers of zeros and ones on this list are not equal, the Leavitt path algebra of this graph is not a crossed product.
For the second two graphs, the lengths in question (modulo 2) are 0, 0, 1, and 1. So, the Leavitt path algebras of the last two graphs are crossed products.   
\label{example_cross_product} 
\end{example}

\section{Strongly graded Leavitt path algebras}\label{section_strongly_graded}

By \cite[Theorem 3.15]{Roozbeh_Israeli} (or \cite[Theorem 1.6.13]{Roozbeh_book}), if $E$ is a finite graph, $L_K(E)$ is strongly graded if and only if $E$ has no sinks. This result was extended in \cite[Theorem 4.2]{Lisa_et_al_strongly_graded} by showing that $L_K(E)$ is strongly graded if and only if $E$ is a row-finite graph without sinks such that Condition (Y), given below, holds.  
\begin{itemize}
\item[(Y)] For every positive integer $k$ and every infinite path $p$ there exists an initial subpath $q$ of $p$ and a path $r$ such that $\ra(r) =\ra(q)$ and $|r|-|q|=k.$ 
\end{itemize}

Theorem \ref{strongly_graded} characterizes the condition that $L_K(E)$ is strongly graded in terms of the properties of $M_E^\Gamma$ (and $G_E^\Gamma$) in case when $E^0$ is finite. In the case that $E^0$ is finite, Theorem \ref{strongly_graded} also shows that it is not necessary to require Condition (Y) in order to have that $L_K(E)$ is strongly graded in \cite[Theorem 4.2]{Lisa_et_al_strongly_graded} and that it is not necessary to require that $E^1$ is finite for the implication $L_K(E)$ is strongly graded $\Rightarrow$ $E$ has no sinks in \cite[Theorem 1.6.13]{Roozbeh_book} or \cite[Proposition 45]{Nystedt_Oinert} (see also graphs in part (1) and (3) of Example \ref{example_strongly_graded}).

\begin{theorem} Let $E$ be a graph with finitely many vertices and let $\Gamma$ be the infinite cyclic group generated by $x$. The following conditions are equivalent. 
\begin{enumerate}
\item $L_K(E)$ is strongly graded.

\item $1_E$ is a strong order-unit of $M_E^\Gamma$ (equivalently of $G_E^\Gamma$). 

\item $E$ is a row-finite graph with no sinks. 
\end{enumerate}
\label{strongly_graded} 
\end{theorem}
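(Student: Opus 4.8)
The plan is to establish the cycle of implications $(1)\Rightarrow(3)\Rightarrow(2)\Rightarrow(1)$, drawing on the machinery already set up: Proposition~\ref{strong_order_unit_prop}, Theorem~\ref{NvO_theorem}, the periodicity characterization Theorem~\ref{periodic}, and the graded matricial representation of finite no-exit graphs from section~\ref{subsection_LPAs}. The assumption that $E^0$ is finite is what makes ``no sinks'' enough, and it is what we exploit repeatedly.

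First I would prove $(1)\Rightarrow(3)$. Suppose $L_K(E)$ is strongly graded. Since $E^0$ is finite, $L_K(E)$ is unital with identity $\sum_{v\in E^0}v$. For the ``no sinks'' part: if $v$ were a sink, then $v$ is a homogeneous idempotent and $vL_K(E)v = Kv$ is trivially graded, so the corner $vL_K(E)v$ cannot be strongly graded; but a corner $eRe$ of a strongly graded ring $R$ by a homogeneous idempotent $e$ is strongly graded (this follows since $e(R_\gamma R_{\gamma^{-1}})e \subseteq (eR_\gamma e)(eR_{\gamma^{-1}}e)$ when $e$ is homogeneous of degree $\varepsilon$, using $e\in R_\varepsilon$), contradiction. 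Alternatively, and perhaps more cleanly, one argues via $K_0$: by Proposition~\ref{strong_order_unit_prop}, $[L_K(E)]=1_E$ is a strong order-unit of $\V^\Gamma(L_K(E))\cong M_E^\Gamma$, hence $x^{-1}1_E \leq n\,1_E$ for some $n$, and since $E^0$ is finite a sink $v$ contributes a generator $[v]$ with $x^{-1}[v]$ not dominated by any multiple of $1_E$ in the graph monoid — this needs a small computation with the defining relations (A1). For row-finiteness: if $v$ is an infinite emitter, the generator $q_Z$ (for $Z$ finite) appears in the monoid and again one shows $1_E$ fails to be a strong order-unit, or simply notes that strong gradedness of $L_K(E)$ forces $(L_K(E))_1(L_K(E))_{-1} \ni v$, and expanding $v$ using (CK2)-type relations is impossible for an infinite emitter since (CK2) is not imposed. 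So $E$ is row-finite with no sinks; with finitely many vertices this forces $E$ to be finite.

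Next, $(3)\Rightarrow(2)$. If $E$ is finite with no sinks, I would invoke Theorem~\ref{periodic}: every vertex lies on or leads into a structure, but more directly, a finite graph with no sinks has the property that every vertex is regular and every infinite path... hmm, actually the cleanest route is via the matricial representation only when $E$ is also no-exit. In general I would instead show directly that $1_E$ is a strong order-unit of $M_E^\Gamma$: given any generator $g$ (a vertex $v$, since there are no infinite emitters), I need $x^{-k}[v] \leq n\,1_E$ and $x^k[v]\leq n\,1_E$ for suitable $n$. Using relation (A1) repeatedly starting from $v$ (every vertex is regular), $[v] = \sum_{e\in s^{-1}(v)} x\,[\ra(e)]$, and iterating, $[v]$ equals a sum of terms $x^j[w]$ with $j$ ranging over all path-lengths from $v$; since $E$ has no sinks, paths of every length exist from every vertex, so one can reach terms with any prescribed power of $x$, giving $x^{-k}[v]\leq$ (the total such sum) which is bounded by a multiple of $1_E$ in $M_E^\Gamma$ (here cancellativity of $M_E^\Gamma$ for $\Gamma=\langle x\rangle$, recalled before Theorem~\ref{periodic}, keeps the order well-behaved). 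The symmetric bound for $x^k[v]$ is immediate since $x^k[v] = \sum x^{k-j}[\cdots]$ already. This is the step I expect to be the main obstacle — making the ``paths of every length exist, so we can shift the $x$-degree freely'' argument rigorous inside the monoid $M_E^\Gamma$ with its three families of relations, and confirming that finitely many vertices keeps the relevant sums finite.

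Finally, $(2)\Rightarrow(1)$ is Proposition~\ref{strong_order_unit_prop} applied to $R=L_K(E)$ together with the identification $(\V^\Gamma(L_K(E)),[L_K(E)])\cong(M_E^\Gamma,1_E)$ recalled in section~\ref{subsection_graph_group}: $1_E$ being a strong order-unit of $M_E^\Gamma$ means $[L_K(E)]$ is a strong order-unit of $\V^\Gamma(L_K(E))$, which by that proposition is equivalent to $L_K(E)$ being strongly graded. I would close by remarking that this gives the advertised alternative proof of ``$L_K(E)$ strongly graded $\iff$ $E$ has no sinks'' for finite $E$, since for finite $E$ conditions (1) and (3) specialize to exactly that, with the monoid-theoretic condition (2) as an intermediate characterization, and I would point to parts (1) and (3) of Example~\ref{example_strongly_graded} to show that the hypothesis ``$E^1$ finite'' is genuinely removable here.
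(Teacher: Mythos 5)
There are genuine gaps in both of the non-formal implications.

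For $(1)\Rightarrow(3)$: your first argument rests on the claim that a corner $eRe$ of a strongly graded ring by an idempotent $e\in R_\varepsilon$ is again strongly graded, via the inclusion $e(R_\gamma R_{\gamma^{-1}})e\subseteq (eR_\gamma e)(eR_{\gamma^{-1}}e)$. That inclusion is false: from $1=\sum_i a_ib_i$ with $a_i\in R_\gamma$, $b_i\in R_{\gamma^{-1}}$ one gets $e=\sum_i ea_ib_ie$, but there is no way to insert $e$ between $a_i$ and $b_i$. Concretely, $R=\M_3(K[x^2,x^{-2}])(0,1,1)$ is strongly graded by Corollary \ref{matrix_alg_corollary}, yet its corner at $e_{22}+e_{33}$ is $\M_2(K[x^2,x^{-2}])(1,1)=\M_2(K[x^2,x^{-2}])(0,0)$, which vanishes in every odd degree. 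So the corner route collapses. Your fallback route through the monoid is the paper's actual argument, but the ``small computation'' you defer is the whole content of the step: one needs Lemma \ref{confluence} to upgrade $x^{-1}v+a\sim n\,1_E$ to $x^{-1}v+a\to c$ and $n\sum_{w}w\to c$, together with the observation that (A1)--(A3) never decrease exponents of $x$, so the right-hand side can never produce the summand $x^{-1}v$ that the sink forces to persist on the left. The infinite-emitter case needs the analogous persistence argument for the $q_Z$ generators; ``(CK2) is not imposed'' is not a proof that $v\notin L_1L_{-1}$.

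For $(3)\Rightarrow(2)$, the proposal actually breaks down. Iterating (A1) gives $[v]=\sum_{|p|=k}x^{k}[\ra(p)]$, which yields $x^{-k}[v]\le n\,1_E$ --- that half is fine and matches the paper. But the ``symmetric bound'' $x^{k}[v]\le n\,1_E$ for $k>0$ is not immediate and cannot come from (A1) alone: the relations only increase exponents, so starting from $x^{k}[v]$ you only ever reach terms $x^{k+j}[w]$ with $j\ge 0$, never terms of exponent $0$; your expression ``$x^k[v]=\sum x^{k-j}[\cdots]$'' has no counterpart among the defining relations. What is needed is, for each $v$ and $k$, a vertex $w$ with $[w]\ge x^{k}[v]$, i.e., the ability to rewind degrees. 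The paper obtains this from the conjunction of $E^0$ finite and no sinks: every vertex connects to a cycle, and going backwards around a cycle of length $n_i$ a suitable number of revolutions produces $w_i$ with $w_i\to x^{k+1}v_i+d_i$. This cycle argument is the crux of the theorem and is entirely absent from your sketch. The remaining implication $(2)\Rightarrow(1)$ via Proposition \ref{strong_order_unit_prop} and the identification $(\V^\Gamma(L_K(E)),[L_K(E)])\cong(M_E^\Gamma,1_E)$ is correct and is exactly what the paper does.
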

\begin{proof}
By Proposition \ref{strong_order_unit_prop}, (1) $\Leftrightarrow$ (2). We show (2) $\Rightarrow$ (3) and (3) $\Rightarrow$ (2). 

If (2) holds, we show that $E$ has neither sinks nor infinite emitters. Assume that $v\in E^0$ is a sink. Let $n$ be a positive integer such that $x^{-1}[v]\leq n 1_E$ so that $x^{-1}v+a\sim n \sum_{v\in E^0}v$ holds in $F_E^\Gamma$ for some $a\in F_E^\Gamma.$ By Lemma \ref{confluence}, there is $c\in F_E^\Gamma$ such that $x^{-1}v+a\to c$ and $n \sum_{v\in E^0}v\to c.$ Since $v$ is a sink, none of (A1) to (A3) can be applied to $x^{-1}v$ and so $x^{-1}v$ is a summand of $c.$ However, since the axioms (A1) to (A3) increase the powers of $x$ or leave them the same in the resulting terms, the relation $n \sum_{v\in E^0}v\to c$ implies that no summand of $c$ can be of the form $x^{-1}v.$ Hence, we reach a contradiction.  

Assume that $v\in E^0$ is an infinite emitter. Following the steps of the previous case, we obtain that $x^{-1}v+a\to c$ and $n \sum_{v\in E^0}v\to c$ holds for some positive integer $n$ and some $a,c\in F_E^\Gamma.$ The first case shows that the term $x^{-1}v$ cannot remain unchanged in the process of obtaining $c$ from $x^{-1}v+a,$ so (A2) has to be used at some point for this term. Changing the order of the use of the axioms for $x^{-1}v+a\to c,$ we can assume that this application of (A2) is the first step. Hence, $x^{-1}q_Z+\sum_{w\in Z}\ra(e)+a\to c$ for some finite and nonempty $Z\subseteq \so^{-1}(v).$ In each subsequent step, either the term $x^{-1}q_Z$ is not changed, or (A3) is used for it and the result has a summand of the form $x^{-1}q_W$ for some finite and nonempty $W\subseteq\so^{-1}(v)$ such that $W\supsetneq Z.$ In any case, $c$ ends up having a summand of the form $x^{-1}q_Z$ for some finite and nonempty $Z\subseteq \so^{-1}(v).$ Using the same argument as in the first case, we can conclude that the relation $n \sum_{v\in E^0}v\to c$ implies that no summand of $c$ can be of the form $x^{-1}q_Z.$ Hence, we reach a contradiction in this case also.  

If (3) holds, then $E$ is a row-finite graph with finitely many vertices. Hence, $E$ is finite. Since $E$ is finite and has no sinks, every vertex connects to a cycle. Thus, every vertex of $E$ is either on a cycle or it connects to finitely many cycles.  

We claim that for any $v\in E^0,$ there is an element $a\in F_E^\Gamma-\{0\}$ with support containing only vertices on cycles such that $v\to a$. We prove this claim using induction on the minimum $n$ of the lengths of paths $p$ from $v$ to cycles such that only $\ra(p)$ is on a cycle. If $n=0,$ $v$ is on a cycle and one can take $a=v.$ Assuming the induction hypothesis, consider any $v\in E^0$ with $n>0.$ If $v$ is on a cycle, we can take $a=v$ again. If $v$ is not on a cycle, the vertex $v$ is regular since $E$ is finite and $v$ is not a sink, so $\so^{-1}(v)$ is nonempty and finite and (A1) can be applied to $v.$ For every $e\in \so^{-1}(v),$ the minimum of lengths of paths from $\ra(e)$ to cycles is less than $n$ and we can use the induction hypothesis to obtain $a_e\in F_E^\Gamma$ with vertices in the support on cycles and $\ra(e)\to a_e$. Then $a=\sum_{e\in  \so^{-1}(v)}xa_e$ has vertices in the support on cycles and \[v\to \sum_{e\in  \so^{-1}(v)}x\ra(e)\to \sum_{e\in  \so^{-1}(v)}xa_e=a.\]

This shows that, for $v\in E^0,$ one can find a positive integer $m,$ integers $k_i$ and vertices $v_i$ on cycles for $i=1,\ldots, m$ such that $v\to \sum_{i=1}^m x^{k_i}v_i$ holds in $F_E^\Gamma.$ Moreover, since the axioms either increase the degrees of monomials or leave them the same, $k_i\geq 0$ for all $i.$ If $n_i$ is the length of the cycle $c_i$ which contains $v_i,$ $w_i$ is a vertex on $c_i,$ and $t_i$ is a positive integer, then $w_i\to x^{t_in_i}w_i+b_i$ some $b_i\in F_E^\Gamma$ where $\to$ is obtained by applying the axioms (in this case only (A1) is applicable) following the cycle $c_i$ for $t_i$ revolutions\footnote{Note that $b_i=0$ if and only if the cycle on which $w_i$ is has no exits by \cite[Lemma 3.8]{Roozbeh_Lia_comparability}.}. Let us consider two cases $k_i+1\leq n_i$ and $k_i+1> n_i.$ If $k_i+1\leq n_i,$ let $w_i$ be the vertex on the cycle $c_i$ such that the length of the path on the cycle from $w_i$ to $v_i$ is exactly $k_i+1.$ Then, $w_i\to x^{k_i+1}v_i+d_i$ for some $d_i\in F_E^\Gamma$ by applying (A1) following that path. If $k_i+1> n_i,$ let $k_i+1=t_in_i+l_i$ for some positive $t_i$ and $0\leq l_i<n_i.$ Let $w_i$ be the vertex on the cycle $c_i$ such that the length of the path on the cycle from $w_i$ to $v_i$ is exactly $l_i.$ 
Then, \[w_i\to x^{t_in_i}w_i+b_i\to x^{t_in_i+l_i}v_i+d_i'+b_i= x^{k_i+1}v_i+d_i\] for some $b_i,d_i'\in F_E^\Gamma$ and $d_i=b_i+d_i'.$ Thus, in either case, the relation $[w_i]\geq x^{k_i+1}[v_i]$ holds in $M_E^\Gamma.$ Let 
$l_v$ be the maximal number of repetitions of a term in the sum  $\sum_{i=1}^m [w_i],$ $k_v$ be the maximal number of repetitions of a term in the sum $\sum_{e\in \so^{-1}(v)}[\ra(e)]$ and  $l$ be the maximum of the set $\{l_v\, |\, v\in E^0\}\cup\{k_v\,|\, v\in E^0\}.$
Thus, we have that
\[x[v]=\sum_{i=1}^m x^{k_i+1}[v_i]\leq \sum_{i=1}^m [w_i]\leq \sum_{w\in E^0}l_v[w]=l_v1_E\leq l1_E\]
which implies that \[x1_E=\sum_{v\in E^0}x[v]\leq \sum_{v\in E^0} l1_E=|E_0|l1_E\]
and so for any $v\in E^0,$ $x^2[v]\leq xl1_E\leq |E_0|l^21_E.$ Continuing this argument, we obtain that \[x^n[v]\leq |E_0|^{n-1}l^n1_E.\]

In addition, since $v\to \sum_{e\in \so^{-1}(v)}x\ra(e)$ for any $v\in E^0,$ we have that  
\[x^{-1}[v]=\sum_{e\in \so^{-1}(v)}[\ra(e)]\leq \sum_{w\in E^0}k_v[w]=k_v1_E\leq l1_E\]
which implies that 
\[x^{-1}1_E=\sum_{v\in E^0}x^{-1}[v]\leq \sum_{v\in E^0} l1_E=|E_0|l1_E\]
and so for any $v\in E^0,$ $x^{-2}[v]\leq x^{-1}l1_E\leq |E_0|l^21_E.$
Continuing this argument, we obtain that \[x^{-n}[v]\leq |E_0|^{n-1}l^n1_E.\]

Thus, for any $a\in F_E^\Gamma$ with a normal representation $a=\sum_{j=1}^k x^{m_j}v_j,$  
\[[a]=\sum_{j=1, m_j=0}^k x^{m_j}[v_j]+\sum_{j=1, m_j\neq 0}^k x^{m_j}[v_j]\leq  \left(k+\sum_{j=1, m_j\neq 0}^k|E_0|^{|m_j|-1}\,l^{|m_j|} \right)1_E.\] Hence, $1_E$ is a strong order-unit and so (2) holds. 
\end{proof}

If every infinite path of $E$ ends in a cycle, then one can directly check that Condition (Y) holds. So, if $E$ is a row-finite graph without sinks such that every infinite path ends in a cycle, then $L_K(E)$ is strongly graded. The first graph below shows that the converse fails.  

\begin{example}
\begin{enumerate}
\item If $E$ is the row-finite graph without sinks below, $$\xymatrix{& \ar@{.>}[r] & \bullet \ar[r] & \bullet \ar[r] & \bullet\ar[r] & \bullet \ar@{.>}[r] &}$$ it is direct to check that Condition (Y) holds because of the existence of paths of all lengths ending at arbitrary vertex of any infinite path. Hence, $L_K(E)$ is strongly graded. However, $E$ has infinite paths which do not end in cycles.  

Since every infinite path of this graph is an infinite sink, this example also shows that, as opposed to finite sinks, the infinite sinks do not prevent the algebra $L_K(E)$ from being strongly graded. One may suspect that if $E$ is a row-finite graph without sinks such that every infinite path ends in a cycle {\em or} an infinite sink, then $L_K(E)$ is necessarily strongly graded. However, that is also not true as the next example shows.  

\item Let $E$ be the graph below. $$\xymatrix{\bullet \ar[r] & \bullet \ar[r] & \bullet \ar[r] & \bullet \ar@{.>}[r]&}$$
Condition (Y) fails for the infinite path containing every edge of this graph and $k=1$ since for any initial path $q$ of length $l$ there is no path of length $l+1$ which ends in $\ra(q).$ Hence, $L_K(E)$ is not strongly graded.

\item Based on the first example, one may question whether $L_K(E)$ being strongly graded implies that $E$ is a row-finite graph without sinks such that every infinite path ends in a cycle or an infinite sink. A consideration of the graph $E$ below shows that this is also not true.  

$$\xymatrix{   
\bullet  \ar@(ul,ur)   & \bullet \ar@(ul,ur)    & \bullet  \ar@(ul,ur) &  & \\   
\bullet \ar[r]\ar[u] & \bullet \ar[r]  \ar[u] & \bullet \ar[r]\ar[u] & \bullet \ar@{.>}[r] \ar@{.>}[u] &\\
\bullet \ar[u] & \bullet \ar[u] & \bullet\ar[u] & \bullet\ar[u]  &\\  
\ar@{.>}[u]    &  \ar@{.>}[u]   & \ar@{.>}[u]   & \ar@{.>}[u]   &
}$$
Just as the first graph, $E$ is a row-finite graph without sinks and it is direct to check that Condition (Y) holds. So, $L_K(E)$ is strongly graded. However, $E$ has infinite paths which end neither in cycles nor in infinite sinks.      
\end{enumerate}
\label{example_strongly_graded}
\end{example}

Next, we show that the converse of Proposition \ref{ur_and_sg_implies_cross_product} holds for unital Leavitt path algebras. 

\begin{corollary}
If $E$ is a graph with finitely many vertices, then the following are equivalent. 
\begin{enumerate}[\upshape(1)]
\item $L_K(E)$ is a crossed product. 
 
\item $E$ is a finite no-exit graph without sinks such that Condition (EDL) holds.  

\item $L_K(E)$ is strongly graded and graded unit-regular.  
\end{enumerate}
\label{unit_reg_corollary} 
\end{corollary}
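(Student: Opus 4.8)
The plan is to prove the equivalences by assembling results established earlier. The equivalence of (1) and (2) is exactly the equivalence of conditions (i1) and (i2) of Theorem \ref{main}, so nothing new is needed there. For (3) $\Rightarrow$ (1), note that $E^0$ is finite by hypothesis, hence $L_K(E)$ is unital (Section \ref{subsection_LPAs}); then Proposition \ref{ur_and_sg_implies_cross_product} applies verbatim and shows that a strongly graded, graded unit-regular $L_K(E)$ is a crossed product. Thus the only implication requiring real work is (1) $\Rightarrow$ (3). One half of it is immediate: every crossed product is strongly graded (Section \ref{subsection_strongly_graded_and_cross}). So the entire content of the corollary reduces to the claim that a crossed product Leavitt path algebra $L_K(E)$ with $E^0$ finite is graded unit-regular.

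To establish this, I would invoke the matricial form of a crossed product $L_K(E)$ supplied by Theorem \ref{main}: condition (i1) implies (i3), so $L_K(E)$ is graded isomorphic to $R=\bigoplus_{i=1}^n\M_{k_im_i}(K[x^{m_i},x^{-m_i}])(k_i(0),\ldots,k_i(m_i-1))$. Since graded unit-regularity is preserved by graded isomorphisms and a finite direct sum of graded unit-regular rings is graded unit-regular, it suffices to show that each summand $R'=\M_{km}(K[x^m,x^{-m}])(k(0),k(1),\ldots,k(m-1))$ is graded unit-regular. Two features of $R'$ will be used. First, $R'$ is a crossed product by Corollary \ref{matrix_alg_corollary}(2), because the multiplicities $k_0=\cdots=k_{m-1}=k$ are positive and equal. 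Second, the identity (degree $0$) component $R'_\varepsilon$ is semisimple artinian: an entry $a_{pq}$ of a matrix in $R'_\varepsilon$ lies in $\left(K[x^m,x^{-m}]\right)_{\gamma_q-\gamma_p}$, and since the shifts lie in $\{0,1,\ldots,m-1\}$ this component is nonzero exactly when $\gamma_p=\gamma_q$; hence $R'_\varepsilon$ is block diagonal with $m$ blocks, each isomorphic to $\M_k(K)$, so $R'_\varepsilon\cong\prod_{l=0}^{m-1}\M_k(K)$, which is (ordinary) unit-regular.

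The proof is then completed by the following elementary observation, which I would state and prove as a short lemma: \emph{a unital crossed product $S$ whose identity component $S_\varepsilon$ is unit-regular is graded unit-regular.} Indeed, given $a\in S_\gamma$, choose an invertible element $u\in S_\gamma$; then $au^{-1}\in S_\varepsilon$, so there is an invertible $v\in S_\varepsilon$ with $(au^{-1})v(au^{-1})=au^{-1}$, and multiplying this identity on the right by $u$ gives $a(u^{-1}v)a=a$ with $u^{-1}v$ an invertible element of $S_{\gamma^{-1}}$. Applying the lemma to $S=R'$ finishes the argument. I do not anticipate a genuine obstacle; the only points needing care are the bookkeeping in this last lemma and the explicit identification of $R'_\varepsilon$, both routine. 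The conceptual moral, worth a remark, is that for a crossed product graded unit-regularity is governed entirely by ordinary unit-regularity of the identity component, and this is precisely why the converse of Proposition \ref{ur_and_sg_implies_cross_product} holds for unital Leavitt path algebras (whose degree $0$ component is locally semisimple) even though it fails in general.
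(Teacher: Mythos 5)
Your proposal is correct and takes essentially the same route as the paper: both reduce the only nontrivial implication, (1) $\Rightarrow$ (3), to the observation that a crossed product whose identity component is unit-regular is graded unit-regular, which is exactly the step the paper phrases as ``direct to check'' for the skew group ring $L_K(E)_0*_\phi\Zset$. The only cosmetic difference is that you identify the degree-zero component explicitly from the graded matricial form of Theorem \ref{main}(i3), whereas the paper cites the general fact that $L_K(E)_0$ is a matricial algebra.
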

\begin{proof}
By Theorem \ref{main}, (1) $\Leftrightarrow$ (2) and, by Proposition \ref{ur_and_sg_implies_cross_product}, (3) $\Rightarrow$ (1). We show (1) $\Rightarrow$ (3). 

If (1) holds, then $L_K(E)$ is strongly graded. By Theorem \ref{main}, $E$ is finite. So, the zero component $L_K(E)_0$ is an ultramatricial algebra over $K$ with unital connecting maps (see \cite[Proposition 2.1.14]{LPA_book}) and, hence, $L_K(E)_0$ is unit-regular. By Theorem \ref{main} also, $L_K(E)$ is a skew group ring over $L_K(E)_0$ and so $L_K(E)\cong_{\gr} L_K(E)_0*_{\phi}\Zset$ for some map $\phi.$ It is direct to check that the unit-regularity of $L_K(E)_0$ implies the graded unit-regularity of $L_K(E)_0*_{\phi}\Zset.$ So, $L_K(E)$ is graded unit-regular. 
\end{proof}

By \cite[Theorem 5.3]{Lia_cancellation}, if $E$ is a finite graph, then $L_K(E)$ is graded unit-regular if and only if $E$ is a no-exit graph without sinks which receive edges and such that Condition (EDL) holds. Thus, if condition (2) of Corollary \ref{unit_reg_corollary} holds then $L_K(E)$ is strongly graded by Theorem \ref{strongly_graded} and graded unit-regular by \cite[Theorem 5.3]{Lia_cancellation} so (3) holds. This provides an alternative proof of Corollary \ref{unit_reg_corollary}. 

Corollary \ref{unit_reg_corollary} and \cite[Theorem 5.3]{Lia_cancellation} enable us to easily create examples of strongly graded rings which are not graded unit-regular and graded unit-regular rings which are not strongly graded. Indeed, if $E$ is the first graph from Example \ref{example_cross_product}, then $L_K(E)$ is strongly graded since $E$ is a finite graph without sinks. However, Condition (EDL) fails so $L_K(E)$ is not graded unit-regular. On the other hand, if $E$ is a single vertex $\bullet$ with no edges, then $L_K(E)$ is unit-regular by \cite[Theorem 5.3]{Lia_cancellation} but $L_K(E)$ is not strongly graded since this vertex is a sink.

\end{document}